\definecolor{labelkey}{rgb}{0,0,1}
\definecolor{purple}{rgb}{0.4, 0.22, 0.33}
\def\eps{\varepsilon}
\def\dd{{\rm d}}
\def\ddt{{\frac{\dd}{\dd t}}}
\def\R {\mathbb{R}}
\def\de{{\partial}}
\newcommand\uu {\boldsymbol{u}}
\def\gr {\boldsymbol{g}}
\def\x{\textbf{x}}
\definecolor{orange}{rgb}{1.0, 0.49, 0.0}
\definecolor{green}{rgb}{0.0, 0.5, 0.0}
\definecolor{brown}{rgb}{0.43, 0.21, 0.1}
\newtheorem{proposition}{Proposition}[section]
\newtheorem{theorem}[proposition]{Theorem}
\newtheorem{lemma}[proposition]{Lemma}
\theoremstyle{definition}
\newtheorem{remark}[proposition]{Remark}
\numberwithin{equation}{section}
\title[Decay rates for the partially dissipative Boussinesq equations]{Asymptotic behavior of 2D stably stratified fluids\\ with a damping term in the velocity equation}
\author[R. Bianchini]{Roberta Bianchini}
\address{IAC, Consiglio Nazionale delle Ricerche, I-00185 Rome, Italy}
\email{roberta.bianchini@cnr.it}
\author[R. Natalini]{Roberto Natalini}
\address{IAC, Consiglio Nazionale delle Ricerche, I-00185 Rome, Italy}
\email{roberto.natalini@cnr.it}
\begin{document}
	\maketitle
	
\begin{abstract}
This article deals with the asymptotic behavior of the two-dimensional inviscid Boussinesq equations with a damping term in the velocity equation.
Precisely, we provide the time-decay rates of the smooth solutions to that system. The key ingredient is a careful analysis of the Green kernel of the linearized problem in Fourier space, combined with bilinear estimates and interpolation inequalities for handling the nonlinearity.
\end{abstract}
	
\section{Introduction}
An important class of variable density fluids is represented by the non-homogeneous incompressible Euler equations in $\mathbb{R}^2$:
\begin{equation}\label{eq:nonlin}
	\begin{aligned}
	 \rho(\de_t+\uu\cdot\nabla)\uu+\nabla  p&=-\rho \, \gr,\\
	(\de_t+\uu\cdot\nabla)\rho&=0,\\
	\nabla\cdot \uu&=0, 
	\end{aligned}
\end{equation}
where $\uu=(u, w) $ is the velocity field, $\rho$ is the density, $p$ is the pressure and  $\gr=(0,g)$ is gravity and all the functions depend on $t\geq 0$ and $(x,y)\in \mathbb{R}^2$. Equations \eqref{eq:nonlin} find a wide application in oceanography, see for instance \cite{Rieutord}, where both the incompressibility and inviscid assumptions are very good approximations of the reality. Taking \eqref{eq:nonlin} as a starting point, it is customary to introduce some additional hypotheses for obtaining the so-called Boussinesq equations, which are formally introduced as follows.
In many physical systems of non-homogeneous fluids, the variations of the density profile are negligible compared to its (constant) average. One then assumes that the equilibrium stratification is a \emph{stable} profile $\bar \rho (x, y) = \bar \rho(y)$, with $\de_y\bar \rho (y) <0$. 
Among all the possible stratification's equilibria, one usually takes into account
locally affine profiles, so that $\de_y\bar{\rho}(y)$ is constant, see \cite{BDSR, Lannes-Saut, MCZD20} and references therein.
We linearize equations \eqref{eq:nonlin} around the \emph{hydrostatic equilibrium}, namely a steady solution with zero velocity field such that
\begin{align*}
(\rho, u, w, p)=(\bar \rho(y), 0, 0,  \bar p(y)),
\end{align*}

where $\bar p'(y)=-g \bar \rho$. More precisely, we consider the following expansions.
\begin{align*}
\rho(t,x,y)&=\bar \rho(y)+ \tilde{\rho}(t,x,y); \notag\\
u(t,x,y)&=\tilde{u}(t,x,y), \; w(t,x,y)=\tilde{w}(t,x,y); \notag\\
p(t,x,y)&=  \bar p(y)+\rho_0\tilde{P}(t,x,y),
\end{align*}

with $\bar \rho (y)=\rho_0+r(y)$, where $\rho_0$ is the (constant) averaged density and $r(y)$ is a function of the vertical coordinate such that $r'(y) < 0$. Thus we plug the previous expansions in system \eqref{eq:nonlin} and we further apply the \emph{Boussinesq approximation}, see \cite{Rieutord}, which consists in neglecting the density variations everywhere but in the gravity terms. More precisely, let us focus on the equation for the vertical velocity $w$ in system \eqref{eq:nonlin} and plug there the above expansions. We obtain
\begin{align*}
(\bar \rho (y) + \tilde \rho) ( \partial_t \tilde w + \tilde{\textbf{u}} \cdot \nabla \tilde w ) - g \bar \rho (y) + \rho_0 \partial_y \tilde P = -g \bar \rho (y) - g \tilde \rho,
\end{align*}
which yields
\begin{align*}
\partial_t \tilde w + \frac{\rho_0 \partial_y \tilde P}{(\bar \rho (y) + \tilde \rho)} = - g \frac{\tilde \rho}{(\bar \rho (y)+\tilde \rho)} - \tilde{\textbf{u}} \cdot \nabla \tilde w.
\end{align*}
In accordance with the Boussinesq approximation, we neglect the density fluctuations and replace $\bar \rho (y) + \tilde \rho$ with $\rho_0$, so that the equation reads
\begin{align*}
\partial_t \tilde w + \partial_y \tilde P = - \frac{g}{\rho_0} \tilde \rho - \tilde{\textbf{u}} \cdot \nabla \tilde w.
\end{align*}

We write the complete system below.
Hereafter we drop the $\emph{tilde}$ for lightening the notation. We obtain the following system

\begin{equation*}
\begin{cases}
\partial_t \rho + \bar \rho'(y) w&=-\textbf{u} \cdot \nabla \rho, \\
\partial_t u + \partial_x P &=- \textbf{u} \cdot \nabla u, \\
\partial_t w + \partial_y P & = - \frac{g}{\rho_0} \rho - \textbf{u} \cdot \nabla w, \\
\partial_x u + \partial_y w & = 0.
\end{cases}
\end{equation*}

Now define $b=\dfrac{g}{\rho_0} \rho$. The equations read

\begin{equation}\label{eq:system-vel-nodamp}
\begin{cases}
\partial_t b - N^2 w &=-\textbf{u} \cdot \nabla b, \\
\partial_t u + \partial_x P &=- \textbf{u} \cdot \nabla u, \\
\partial_t w + \partial_y P & =- b  - \textbf{u} \cdot \nabla w, \\
\partial_x u + \partial_y w & = 0,
\end{cases}
\end{equation}

where the Brunt-V\"ais\"al\"a frequency is given by
\begin{align}
N^2=-\dfrac{g \bar \rho'(y)}{\rho_0}
\end{align}

and $\bar \rho'(y)<0$ since the stratification is stable. We refer to \cite{Rieutord} for a more physically detailed derivation of system \eqref{eq:system-vel-nodamp}, where vertical variations of the pressure are balanced by gravity ($\partial_y \bar{p}=-g \bar{\rho}$). In other words, in the Boussinesq regime the restoring force of equilibrium's fluctuations is gravity (\emph{Archimedes' principle}).

In this article, we investigate the two-dimensional Boussinesq equations with a damping term in the velocity equation, whose interest in applications in electrocapillarity is discussed for instance in \cite{CastroCordoba2019} and references therein, while from the mathematical viewpoint the damping term can be seen as a limit case of  fractional diffusion. Choosing a constant parameter $\alpha >0$, the system reads as follows:

\begin{equation}\label{eq:system-vel}
\begin{cases}
\partial_t b - N^2 w &=-\textbf{u} \cdot \nabla b, \\
\partial_t u + \partial_x P &= - \alpha u - \textbf{u} \cdot \nabla u, \\
\partial_t w + \partial_y P & = - b - \alpha w  - \textbf{u} \cdot \nabla w, \\
\partial_x u + \partial_y w & = 0,
\end{cases}
\end{equation}

We rewrite the equations in vorticity-stream formulation, by introducing the unknown variables

\begin{align}
\omega:=\nabla^\perp \cdot \textbf{u}=(\partial_y, \, -\partial_x) \cdot (u, w), \quad
\textbf{u}=(u, w)=:\nabla^\perp \Phi=(\partial_y \Phi, \, -\partial_x\Phi).
\end{align}

In terms of the new variables, one obtains:
\begin{equation}\label{eq:vorticity-system_full}
\begin{cases}
\partial_t b + N^2 \Delta^{-1} \partial_x \omega & =-\uu \cdot \nabla b, \\
\partial_t \omega + \alpha \omega - \partial_x b &= - \uu \cdot \nabla \omega, \\
\Delta \phi=\omega&=\nabla^\perp \uu.
\end{cases}
\end{equation}

Geophysical fluids gained the interest of the mathematical community a long time ago, we refer to \cite{CDGG2006} for an introduction. 
Well-posedness and stability results for stratified fluids are provided for instance in \cite{Charve17, BDSR, Danchin Paicu2011, DWZZ2018} and references therein. We now mention some previous works concerning the 2D inviscid Boussinesq system \eqref{eq:vorticity-system_full} (with or without damping) in the context of smooth solutions. 
In \cite{Elgindi2015}, the authors obtain almost global existence of solutions $(b, \omega)$ to the inviscid system in vorticity-stream formulation without any damping term, with  $\omega \in H^s \cap H^{-1}, \,  b \in H^{s+1}$,  $s>4$, and initial data such that $ \omega_0 \in  W^{3+\nu, 1} \cap H^s \cap H^{-1}, \,  b_0 \in H^{s+1} \cap W^{4+\nu,1}$, with $\nu>0$.
The result of \cite{WanChen2016} extends \cite{Elgindi2015} to more general initial data. The 2D Boussinesq system with a damping term in the velocity equation \eqref{eq:system-vel}-\eqref{eq:vorticity-system_full} is studied in two recent articles. 
In \cite{CastroCordoba2019}, global existence in $H^s, s > 14$ and decay rates in $H^4$ of the solutions $(b, \uu)$ to system \eqref{eq:system-vel} in the periodic strip with no-flux conditions on the horizontal boundaries are obtained. 

Our true starting point is \cite{Wan2019}, where global existence of solutions $(b, \omega)$ s.t. $\omega \in \dot H^s \cap \dot H^{-2}$, $b \in \dot H^{s+1} \cap \dot H^{-1}$ with $s \ge 5$ to the 2D Boussinesq system in vorticity-stream formulation \eqref{eq:vorticity-system_full} has been proven in the whole space $\R^2$. A similar approach has been used in \cite{TWZZX2020} for the viscous case.

We write here the linear part of system \eqref{eq:vorticity-system_full},

\begin{equation}\label{eq:vorticity-system}
\begin{cases}
\partial_t b + N^2 \Delta^{-1} \partial_x \omega & =0, \\
\partial_t \omega + \alpha \omega - \partial_x b &=0.
\end{cases}
\end{equation}

Now define the following change of variable:

\begin{equation}\label{eq:change-variables}
\Omega:=N(-\Delta)^{-1/2}\omega.
\end{equation}

This way, system \eqref{eq:vorticity-system} reads:
\begin{equation} \label{eq:system-new-omega}
\begin{cases}
\partial_t b - N (-\Delta)^{-1/2} \partial_x \Omega&=0,\\
\partial_t \Omega - N (-\Delta)^{-1/2} \partial_x b &= - \alpha \Omega.
\end{cases}
\end{equation}
The equations in frequency variable read:
\begin{equation}\label{eq:system-Fourier}
\begin{cases}
\partial_t \widehat b - \frac{iN\xi_1}{|\xi|} \widehat \Omega&=0, \\
\partial_t \widehat \Omega - \frac{iN\xi_1}{|\xi|} \widehat b &=-\alpha \widehat \Omega,
\end{cases}
\end{equation}
and the eigenvalues are given by:
\begin{align}\label{eq:eigenvalues}
\lambda_{\pm}&=\dfrac{\alpha}{2} \pm \dfrac{1}{2} \sqrt{\alpha^2-\dfrac{4 N^2 \xi_1^2}{|\xi|^2}}.
\end{align}

We can compare system \eqref{eq:system-new-omega} with the first order formulation of the wave equation with damping
\begin{equation}\label{eq:wave-damo}
\begin{cases}
\partial_t b + \partial_x \Omega & =0, \\
\partial_t \Omega + \partial_x b &=-\alpha \Omega,
\end{cases}
\end{equation}
which is just a special case of the general class of weakly dissipative hyperbolic systems in conservative-dissipative form, which have been studied for instance in \cite{BHN2007}, i.e. multidimensional hyperbolic systems that can be written in the following form: 
\begin{equation}\label{eq:dissip}
\partial_t U + \sum_{i=1}^d A_i(U)\partial_{x_i} U = G(U),
\end{equation}
where $U\in\mathbb{R}^k$ depends on $t\geq 0$ and $x\in \mathbb{R}^d$, $A_i(U)$ are smooth symmetric $k\times k$-matrices and $G(U)$ is a smooth source term. Consider a constant equilibrium value $\bar U$ for system \eqref{eq:dissip}, i.e. such that $G(\bar U)=0$. A system is in conservative-dissipative form if there exists an integer $0<m< k$ and a positive definite $(k-m)\times (k-m)$ matrix $D$ such that:
\begin{equation}\label{eq:dissip1}
G'(\bar U)=\begin{pmatrix}
0 & 0 \\
0 & -D
\end{pmatrix}.
\end{equation}
It is well known that to obtain global existence results of smooth solutions, at least for small perturbations of a constant equilibrium state, for this class of systems, some supplementary conditions are needed to guarantee a sufficient coupling between the $m$-dimensional conservative and the $(k-m)$-dimensional dissipative part of the system. As discussed in detail in \cite{HN, yong, SK}, it is possible to obtain those results under the so-called Shizuta-Kawashima condition, here stated in a quite unusual form:\\

[SK] Given an equilibrium value $\bar U$ for system \eqref{eq:dissip}, and set $$A(\bar U, \xi)=\sum_{i=1}^d A_i(U)\xi_i=\begin{pmatrix}
A_{11}(\bar U, \xi) & A_{12}(\bar U, \xi) \\
A_{21}(\bar U, \xi) & A_{22}(\bar U, \xi)
\end{pmatrix}.$$
Under the assumption \eqref{eq:dissip1}, the Shizuta-Kawashima condition holds if, for all fixed $\xi\in \mathbb{R}^d\backslash \{0\}$ and every eigenvector $W\in \mathbb{R}^m$ of the symmetric matrix $A_{11}(\bar U, \xi)$ , we have that $A_{21}(\bar U, \xi) W\neq 0$. 

We cannot directly apply this approach to system \eqref{eq:system-Fourier}, since in this case we are dealing with a 0-order operator, and the functional framework is quite different. In particular, unlike the case of partially dissipative hyperbolic systems of first order, where the low frequency regime is represented by the heat kernel $e^{-t|\xi|^2}$, so that derivatives decay faster in time, here we can see from \eqref{eq:eigenvalues} (and a further discussion below) that the kernel behaves like $e^{-{t|\xi_1|^2}/{|\xi|^2}}$ when the first normalized component $\xi_1/|\xi|$ of the frequency variable $\xi=(\xi_1, \xi_2)$ is small. This implies that in our case only horizontal derivatives enhance time decay, but at the price of regularity.  Since the dispersion relation \eqref{eq:eigenvalues} is order zero, indeed, it is the direction rather than the modulus of the wavelength which governs the wave dynamics. This discussion will be clear in Section \ref{Sect3}. However, recalling the approach of \cite{KZuazua2011} we can notice that, at least formally, condition [SK] does not apply to system \eqref{eq:system-Fourier}, since the term $- \frac{iN\xi_1}{|\xi|} $ can vanish on the one-dimensional manifold $\xi_1=0$. This situation is analogous to the framework proposed in \cite{KZuazua2011} to deal with the cases where condition [SK] fails. In that paper, the analysis of the solutions to the linearized system \eqref{eq:dissip} was established precisely in the case where $A_{21}(\bar U, \xi) W = 0$ on a submanifold of $
\{\xi\in\mathbb{R}^d\backslash \{0\}\}$ of zero measure. In \cite{KZuazua2011}, a clever strategy based on the Kalman Rank Condition and the construction of an explicit Lyapunov functional was implemented to show the time decay rates for a suitable decomposition of the solutions to the linearized equation. 
Although in the present paper we cannot use this analysis, we can work out some estimates which are inspired by those in \cite{KZuazua2011}, to characterize the asymptotic behavior of smooth solutions to system \eqref{eq:vorticity-system_full}. 

Our result is stated below.

\begin{theorem}[Decay rates for the nonlinear system]\label{thm:decay-nonlinear-intro} 
Let $s \ge 5$, $\nu_0 >0$ and $\eps_0>0$ small enough. 
Let $(b(t), \omega (t))$ be the unique solution to system \eqref{eq:vorticity-system_full}, with initial data $(b_0, \omega_0)$ where $b_0 \in W^{2+\nu_0+s,1}(\R^2) \cap \dot H^{-1}(\R^2)  \subset \dot H^{s+2}(\R^2)  \cap \dot H^{-1}(\R^2) $ and $\omega_0 \in W^{1+\nu_0 + s, 1}(\R^2)  \cap \dot H^{-2} \subset \dot H^{s+1}(\R^2)  \cap \dot H^{-2}(\R^2)$. Introduce 
\begin{align*}
\tilde{\mathcal{E}}_0:=\|b_0\|_{\dot H^{-1} \cap \dot H^{s+1}} + \|\omega_0\|_{\dot H^{-2} \cap \dot H^s}, \quad  \mathcal{E}_0:=\|b_0\|_{W^{2+\nu_0 + s, 1} \cap \dot H^{-1}} + \|\omega_0\|_{W^{1+\nu_0 + s, 1} \cap \dot H^{-2}},
\end{align*}
which satisfy $\tilde{\mathcal{E}_0} \le C \mathcal{E}_0$, where $C>0$ is the Sobolev embedding constant. We assume that
\begin{align*}
\mathcal{E}_0 < C^{-1} \eps_0.
\end{align*}
Then, for $0 \le \sigma \le s-2$, the following nonlinear decay estimates hold true:
\begin{align*}
 \|b(t)\|_{H^{\sigma+1}} & \lesssim t^{-\frac 1 4} \mathcal{E}_0, \quad \|\omega(t)\|_{H^\sigma}  \lesssim t^{-\frac 3 4} \mathcal{E}_0.
\end{align*}
\begin{align*}
\|\de_x b(t)\|_{H^{\sigma}}  & \lesssim t^{-\frac 3 4} \mathcal{E}_0, \quad \|\de_x \omega(t)\|_{H^{\sigma-1}}\lesssim t^{-\frac 5 4} \mathcal{E}_0.
\end{align*}
\end{theorem}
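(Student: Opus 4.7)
Plan. The strategy is a bootstrap argument on the nonlinear Duhamel formula, driven by sharp decay estimates for the linear semigroup $S(t)$ of \eqref{eq:system-new-omega}. Global existence in the stated regularity class being available from \cite{Wan2019}, the heart of the matter is to capture the four decay rates of the statement in a single a priori estimate and to close it under the smallness assumption.

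\emph{Linear decay estimates.} From the eigenvalues \eqref{eq:eigenvalues} I distinguish the dissipative region $\{2N|\xi_1|<\alpha|\xi|\}$, where the slow eigenvalue satisfies $\lambda_-\simeq N^2\xi_1^2/(\alpha|\xi|^2)$ and the Green kernel behaves like $e^{-ct\xi_1^2/|\xi|^2}$, from the complementary region and the high frequencies, where exponential decay at rate $\alpha/2$ (possibly with oscillations) is available. A direct computation of the eigenprojectors shows that the slow-mode component acquires an extra factor $\xi_1/|\xi|$ when projected onto the $\Omega$ variable, which accounts for the additional $t^{-1/2}$ separating the $\omega$-rate from the $b$-rate. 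Combining $\|\widehat{u_0}\|_{L^\infty}\le\|u_0\|_{L^1}$ with the polar-coordinate identity
$$\int_{|\xi|\le 1}|\xi_1|^{2k}\,e^{-2ct\xi_1^2/|\xi|^2}\,d\xi\;\lesssim\;t^{-k-\frac12}$$
yields, in $L^2$, the rate $t^{-1/4-k/2}$ for each horizontal derivative $\de_x^k$, which is exactly the pattern of the statement. The negative-regularity hypotheses $b_0\in\dot H^{-1}$, $\omega_0\in\dot H^{-2}$ compensate the Fourier multipliers introduced by the change of variable \eqref{eq:change-variables} and by the eigenprojectors, ensuring integrability at the origin.

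\emph{Nonlinear bootstrap.} Introduce
$$M(T):=\sup_{0\le t\le T}\!\Bigl\{(1+t)^{\frac14}\|b\|_{H^{\sigma+1}}+(1+t)^{\frac34}\bigl(\|\omega\|_{H^\sigma}+\|\de_x b\|_{H^\sigma}\bigr)+(1+t)^{\frac54}\|\de_x\omega\|_{H^{\sigma-1}}\Bigr\},$$
assume the a priori bound $M(T)\le 2C_\ast\mathcal{E}_0$, and aim to show $M(T)\le C_\ast\mathcal{E}_0$. Writing Duhamel as
$$U(t)=S(t)U_0-\int_0^t S(t-\tau)\bigl(\uu\cdot\nabla b,\,N(-\Delta)^{-1/2}(\uu\cdot\nabla\omega)\bigr)(\tau)\,d\tau,$$
the linear term $S(t)U_0$ meets the target rates by the previous step (in the initial energy $\mathcal{E}_0$). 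For the integral I split $[0,t]=[0,t/2]\cup[t/2,t]$: on the first interval the semigroup keeps its full decay and the nonlinearity is estimated in $L^1\cap L^2$, using Biot--Savart ($\|\uu\|_{L^2}=\|\omega\|_{\dot H^{-1}}$), Sobolev product estimates in $H^\sigma$, and the bootstrap bound; on the second interval the linear factor is replaced by its short-time $L^2$ estimate and all the time decay is extracted from $M(T)$ applied to the nonlinear factors, distributing $\de_x$ inside products via Leibniz whenever the $t^{-1/2}$ gain is needed. The resulting bilinear terms are all at most $M(T)^2$, absorbed into $C_\ast\mathcal{E}_0/2$ for $\mathcal{E}_0$ small enough.

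\emph{Main obstacle.} The chief difficulty is the anisotropic, degenerate kernel $e^{-ct\xi_1^2/|\xi|^2}$: only horizontal derivatives enhance decay, and only at a cost in regularity, so the scheme is tight against the derivative loss of the transport terms and requires careful bookkeeping of which factor carries $\de_x$. A related subtlety is the time-propagation of the negative-regularity norms $\dot H^{-1}$ and $\dot H^{-2}$, which reappear at every application of the linear decay estimate; closing them together with the weighted $L^1$-based part of the bound via interpolation and 2D Sobolev embeddings (such as $H^1\hookrightarrow L^4$) is the technical core of the argument.
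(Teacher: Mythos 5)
Your overall architecture (linear decay for the degenerate kernel $e^{-ct\xi_1^2/|\xi|^2}$, Duhamel, a time-weighted bootstrap functional) matches the paper's, but there is a genuine gap at the core of the nonlinear step: the derivative loss. To apply the linear decay estimate to the Duhamel integrand you must control the nonlinearity in $W^{1+\nu+\sigma,1}$, and the bilinear estimate (Lemma \ref{lem-bilinear-estimate}) converts this into products of $L^2$ and $H^{1+\nu+\sigma}$ norms of $b$, $\Omega$ and their derivatives --- one full derivative \emph{above} what your functional $M(T)$ controls with decay. Consequently your claim that ``the resulting bilinear terms are all at most $M(T)^2$'' cannot be justified by Sobolev product estimates in $H^\sigma$ and Leibniz bookkeeping alone. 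The paper's resolution, which is absent from your proposal, is to interpolate (Lemma \ref{lem-gagliardo}) each over-regular factor between the decaying norm $H^{\sigma-1}$ (or $H^r$) carried by $\mathcal{M}_r$ and the \emph{non-decaying but globally bounded} norm $H^{s_1}$ supplied by Wan's global existence theorem, with the exponent $\theta$ tuned ($\theta\ge\tfrac13$, resp.\ $\tfrac35$) so that the interpolated time decay $\tau^{-\frac34(1+\theta)}$ remains compatible with Lemma \ref{lem-BHN}. This is what produces the closing inequality $\mathcal{M}_r\lesssim\mathcal{E}_0+\mathcal{E}_0^{2/3}\mathcal{M}_r^{4/3}+\mathcal{E}_0^{2/5}\mathcal{M}_r^{8/5}$ (superlinear but \emph{not} quadratic in $\mathcal{M}_r$), dictates $s_1=2+\tfrac32\nu+r$ and hence the two-derivative loss $\sigma\le s-2$, and explains the constraint $\nu<\nu_0/4$. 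Without this mechanism your bootstrap does not close.

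A secondary but substantive imprecision is in the linear step: your displayed integral is restricted to $|\xi|\le1$, and you assign ``the high frequencies'' to the exponentially decaying regime. The slowly decaying region is the full cone $\{|\xi_1|\ll|\xi|\}$, which contains arbitrarily high frequencies; the bound $\|\widehat{u_0}\|_{L^\infty}\le\|u_0\|_{L^1}$ is therefore insufficient for the $H^r$ estimates. The paper instead uses the pointwise decay $|\widehat g(\theta,\rho)|\lesssim\|g\|_{W^{1+\nu+r,1}}/(1+\rho^{1+\nu+r})$ to make the radial integral converge at infinity, and extracts the rate $t^{-(1+k)/2}$ purely from the angular integral via Lemma \ref{lem-tarek}; this is precisely why the $W^{1+\nu+r,1}$ hypothesis (and not merely $L^1$) appears in the statement.
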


\begin{remark}[On the initial data and their size]
The existence of global in time smooth solutions to system \eqref{eq:vorticity-system_full} is proved in [Theorem 1.1, \cite{Wan2019}]. To obtain the decay estimates of Theorem \ref{thm:decay-nonlinear-intro}, we rely on that result (see also Theorem \ref{thm:wan}, where we state [Theorem 1.1, \cite{Wan2019}] in terms of $(b, \Omega)$, where $\Omega$ is defined in \eqref{eq:change-variables}). 
Therefore, in the statement of Theorem \ref{thm:decay-nonlinear-intro} and thereafter, we choose $\eps_0>0$ small enough such that \cite{Wan2019} applies. We assume that for $s \ge 5$ 
the initial data $(b_0, \omega_0)$ satisfy 
\begin{align*}
\tilde{\mathcal{E}}_0:=\|b_0\|_{\dot H^{-1} \cap \dot H^{s+1}} + \|\omega_0\|_{\dot H^{-2} \cap \dot H^s} < \eps_0,
\end{align*}
which implies that there exists a unique smooth solution $(b, \omega)$ to system  \eqref{eq:vorticity-system_full} such that, in particular, 
\begin{align*}
\sup_t \|b (t)\|_{\dot H^{-1} \cap \dot H^{s+1}} + \| \omega (t) \|_{\dot H^{-2} \cap \dot H^s} \le \tilde{\mathcal{E}}_0 \le \eps_0.
\end{align*}
As explained in Remark \ref{rmk-thm}, the hypotheses on the initial data of Theorem \ref{thm:decay-nonlinear-intro} are slightly stronger than the setting of \cite{Wan2019}.
More precisely, for a given $\eps_0$ (for which [Theorem 1.1, \cite{Wan2019}] applies) and for any $\nu_0 >0$, we consider the initial data $(b_0, \omega_0) \in (W^{2+\nu_0+s,1} \cap \dot H^{-1}, W^{1+\nu_0+s} \cap \dot H^{-2})$ such that 
\begin{align*}
\tilde{\mathcal{E}}_0 \le C \mathcal{E}_0=\|b_0\|_{W^{2+\nu_0 + s, 1} \cap \dot H^{-1}} + \|\omega_0\|_{W^{1+\nu_0 + s, 1} \cap \dot H^{-2}} < \eps_0.
\end{align*}
\end{remark}

\subsection*{Plan of the paper}
First, in Section \ref{Sect2}, we perform some preliminary local expansions of the Green function in the Fourier space. Section \ref{Sect3} is devoted to the time decay estimates in the linear case, while Section \ref{Sect4} contains the analysis of the nonlinear system \eqref{eq:vorticity-system_full}.


\section{Eigenvalues expansion and orthogonal projectors}\label{Sect2}
Consider now system \eqref{eq:system-new-omega} with the eigenvalues given  by \eqref{eq:eigenvalues}.
We provide the eigenvalues expansion. Let $\mu:=\frac{\xi_1}{|\xi|}=\cos(\theta)$ for some $\theta \in [0, 2\pi]$. 

\subsection{The case of slow decay: $|\theta-k \pi/2| << 1, \quad k= 1,3$}
Performing a Taylor expansion in this case, namely for $\mu \simeq 0$, one gets the following expressions:

\begin{equation}\label{eq:eig.expansion}
\lambda_+=\alpha-\frac{N^2}{\alpha} \mu^2 + O(\mu^3), \quad \lambda_-=\frac{N^2}{\alpha} \mu^2+O(\mu^3).
\end{equation}

A straightforward computation provides the related eigenvectors:
\begin{equation}\label{eq:eigenvectors}
\mathcal{V}_+=\begin{pmatrix}
N \xi_1\\
i|\xi|\left(\alpha-\frac{N^2 \xi_1^2}{\alpha |\xi|^2}\right)
\end{pmatrix}, \quad 
\mathcal{V}_-=\begin{pmatrix}
\alpha |\xi|\\
iN\xi_1
\end{pmatrix}.
\end{equation}

To obtain explicit semigroup estimates of the linear system, we follow the classical Perturbation Theory by Kato \cite{Kato76}, which was adapted to partially dissipative hyperbolic systems by the authors of \cite{BHN2007} (see also \cite{BianchiniSIMA2018} for explicit computations in the context of singular approximation problems). In the same spirit, the expansions of the eigenprojectors of system \eqref{eq:system-new-omega} are developed below.

Consider the Green kernel $\widehat \Gamma (t, \xi)$ associated with \eqref{eq:system-Fourier}, which satisfies
\begin{equation}\label{eq:system-Fourier-compact}
\begin{aligned}
\ddt \widehat \Gamma & = \left(-B-i\frac{\xi_1}{|\xi|} A\right) \widehat \Gamma, \\
\widehat \Gamma (0, \xi)& =Id,
\end{aligned}
\end{equation}

where

\begin{equation}\label{def:A-B}
A=\begin{pmatrix}
0 & -N\\
-N & 0
\end{pmatrix}, \quad 
B=\begin{pmatrix}
0 & 0 \\
0 & \alpha
\end{pmatrix}.
\end{equation}

Recalling that $\mu=\frac{\xi_1}{|\xi|}$, define $z=i\mu$ and consider the entire function

\begin{equation}\label{def:E}
E(z)=B+zA=\begin{pmatrix}
0 & -N z \\
-Nz& \alpha
\end{pmatrix}.
\end{equation}

At the formal level, one has that

\begin{align*}
\widehat{\Gamma}(t, \xi)=e^{-E(i\xi_1/|\xi|)t} = \sum_{n=0}^\infty  {\frac{(-1)^n}{n!}} \left(-B-\frac{i\xi_1}{|\xi|} A\right)^n.
\end{align*}

If $z$ is not an exceptional point of the complex plane (see \cite{Kato76}), then the following decomposition holds true:

\begin{equation}\label{def:E}
E(z)=\lambda_+(z) \mathbb{P}_+(z) + \lambda_-(z) \mathbb{P}_-(z),
\end{equation}

where $\lambda_\pm(z)$ in \eqref{eq:eig.expansion} are the eigenvalues of $E(z)$ and $\mathbb{P}_\pm(z)$ are the related eigenprojectors. It can be easily seen that the only exceptional point of $E(z)$ is $z=0$, which is considered here.

The eigenprojector for the eigenvalue of $E(z)$ which vanishes in the regime where $z \simeq 0$ is given by

\begin{align}\label{def:eigenprojector}
\mathbb{P}(z)=-\frac{1}{2\pi i} \oint_{|z| \ll 1} (-E(z)-\zeta Id)^{-1} \, d\zeta,
\end{align}

where $\zeta \in \mathbb{C}$ is a complex number. One can expand as follows:

\begin{align*}
\mathbb{P}(z)=Q_0+\sum_{n \ge 1} z^n {P^{(n)}}, 
\end{align*}

where 

\begin{align*}
Q_0=Id-Q_-=\begin{pmatrix}
1 & 0\\
0 & 0
\end{pmatrix}
\end{align*}

is the projector into the kernel of $B$, i.e. $-E(0)-\zeta Id =-B-\zeta Id$, while

\begin{align*}
P^{(n)}=-\frac{1}{2\pi i}  \oint R^{(n)}, \quad n \ge 1, \quad R^{(n)}=(-B-\zeta Id)^{-1} (A (-B-\zeta Id)^{-1})^n.
\end{align*}

Notice that 

\begin{align*}
A(-B-\zeta Id)^{-1}=\begin{pmatrix}
0 & \frac{N}{(\alpha+\zeta)} \\
\frac{N}{\zeta} & 0
\end{pmatrix}.
\end{align*}

Explicitly, one has that

\begin{align*}
R^{(1)}=\frac{-N}{\zeta(\alpha+\zeta)} 
\begin{pmatrix}
0 & 1\\
1 & 0
\end{pmatrix}, \quad 
R^{(2)}=\frac{N^2}{\zeta(\alpha+\zeta)}(-B-\zeta Id)^{-1}.
\end{align*}

By using Cauchy Integral Formula, one obtains 
\begin{align*}
P_1=-\dfrac{1}{2\pi i} \oint_{|\xi |=\varepsilon} R^{(1)}= \begin{pmatrix}
0 & \frac{N}{\alpha} \\
\frac{N}{\alpha} & 0
\end{pmatrix}, 
\quad
P_2=-\dfrac{1}{2\pi i} \oint_{|\xi |=\varepsilon} R^{(2)}=\begin{pmatrix}
-\frac{N^2}{\alpha^2} & 0 \\
0 & \frac{N^2}{\alpha^2}
\end{pmatrix}.
\end{align*}

Thus, the second order expansion of $\mathbb{P}(z)$ reads

\begin{align*}
\mathbb{P}\left(\frac{i\xi_1}{|\xi|}\right)&=Q_0+
\frac{i\xi_1}{|\xi|} 
P_1 + \left(\frac{i\xi_1}{|\xi|} \right)^2 P_2
=\begin{pmatrix}
1+\frac{N^2}{\alpha^2} \frac{\xi_1^2}{|\xi|^2} & \frac{iN}{\alpha} \frac{\xi_1}{|\xi|} \\
\frac{iN}{\alpha} \frac{\xi_1}{|\xi|} & -\frac{N^2}{\alpha^2} \frac{\xi_1^2}{|\xi|^2}
\end{pmatrix}.
\end{align*}

On the other hand, the expansion for the orthogonal projector of the eigenvalue far from 0 is given by

\begin{align*}
\mathbb{P}_-\left(\frac{i\xi_1}{|\xi|}\right)=Id-\mathbb{P}\left(\frac{i\xi_1}{|\xi|}\right)=
\begin{pmatrix}
-\frac{N^2}{\alpha^2} \frac{\xi_1^2}{|\xi|^2} & -\frac{iN}{\alpha} \frac{\xi_1}{|\xi|} \\
-\frac{iN}{\alpha} \frac{\xi_1}{|\xi|} &\frac{N^2}{\alpha^2} \frac{\xi_1^2}{|\xi|^2}
\end{pmatrix}.
\end{align*}

This way, in the regime where $|\theta - k\pi/2| \ll 1, \; k=1,3$, one has that

\begin{align}
\widehat K (t,\xi)&=e^{-E\left(\frac{i\xi_1}{|\xi|}\right)t}=e^{-E(i\cos(\theta))t} \notag\\
&= \begin{pmatrix}
1+\frac{N^2}{\alpha^2} \cos^2(\theta) & \frac{iN}{\alpha} \cos(\theta) \\
\frac{iN}{\alpha} \cos(\theta) & -\frac{N^2}{\alpha^2} \cos^2(\theta)
\end{pmatrix} e^{-\frac{N^2}{\alpha} \cos^2(\theta) t} +  \begin{pmatrix}
-\frac{N^2}{\alpha^2} \cos^2(\theta) & -\frac{iN}{\alpha} \cos(\theta) \\
-\frac{iN}{\alpha} \cos(\theta) & \frac{N^2}{\alpha^2} \cos^2(\theta)
\end{pmatrix} e^{-\frac \alpha 2 t}. \label{eq:spectral-exp}
\end{align}


\subsection{The case of fast decay}
The complementing situation happens when $|\mu|=\frac{|\xi_1|}{|\xi|} \simeq 1$. In that case, the eigenvalue expansions are given by

\begin{align}
\lambda_+=\frac{\alpha}{2} + \frac{\sqrt{\alpha^2-4N^2}}{2}, \quad \lambda_-=\frac{\alpha}{2} - \frac{\sqrt{\alpha^2-4N^2}}{2},
\end{align}

while the eigenvectors read

\begin{align}\label{eq:eigenvectors-high-freq}
\mathcal{V}_+=\begin{pmatrix}
\frac{1}{2}(\alpha-\sqrt{\alpha^2-4N^2})\\
iN
\end{pmatrix}, \quad 
\mathcal{V}_-=\begin{pmatrix}
\frac{1}{2}(\alpha+\sqrt{\alpha^2-4N^2})\\
iN
\end{pmatrix}.
\end{align}

In the regime where $\frac{|\xi_1|}{|\xi|} \simeq 1$, 
\begin{align}\label{eq:spectral-exp1}
\widehat{\mathcal{K}}(t,\xi)=e^{-E\left(\frac{i\xi_1}{|\xi|}\right)t}\simeq (1+t) e^{-\frac{\alpha}{2}t} 
\begin{pmatrix}
 e^{-\frac{t\sqrt{\alpha^2-4N^2}}{2}}+ C_{11} e^{\frac{t\sqrt{\alpha^2-4N^2}}{2}} & e^{-\frac{t\sqrt{\alpha^2-4N^2}}{2}}+C_{12} e^{\frac{t\sqrt{\alpha^2-4N^2}}{2}}\\
e^{-\frac{t\sqrt{\alpha^2-4N^2}}{2}}+C_{21}e^{\frac{t\sqrt{\alpha^2-4N^2}}{2}} & e^{-t\sqrt{\alpha^2-4N^2}}+C_{22} e^{t\sqrt{\alpha^2-4N^2}}
\end{pmatrix},
\end{align}

for some constants $C_{ij} \in \mathbb{C}$, with $i, j \in \{1,2\}$.
The following lemma is then proven.

\begin{lemma}\label{lem-decomposition}
Let $\Gamma(t,x)=\mathcal{F}^{-1}(e^{-E(i\frac{\xi_1}{|\xi|})t})$ be the Green function associated to system \eqref{eq:system-Fourier}. The following decomposition holds:
\begin{align}
\Gamma(t,x)=K(t,x)+\mathcal{K}(t,x),
\end{align}
where $K(t,x), \, \mathcal{K}(t,x)$ are defined in \eqref{eq:spectral-exp}-\eqref{eq:spectral-exp1}.
\end{lemma}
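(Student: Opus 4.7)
The plan is to view the statement as a partition-of-unity decomposition of the symbol $e^{-E(i\xi_1/|\xi|)t}$ in the angular variable $\theta = \arg\xi$, followed by an inverse Fourier transform. Since $\mu = \xi_1/|\xi| = \cos\theta$ depends only on the direction of $\xi$, the two regimes analyzed in Section \ref{Sect2} are naturally separated by a cutoff that is homogeneous of degree $0$.

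More precisely, I would pick a small $\delta > 0$ and a smooth cutoff $\chi \in C^\infty(\mathbb{T})$ with $\chi(\theta) = 1$ for $\theta$ in a $\delta$-neighbourhood of $\{\pi/2, 3\pi/2\}$ and $\chi(\theta) = 0$ outside a slightly larger neighbourhood; set $\tilde \chi := 1 - \chi$, so that $\tilde\chi$ is supported where $|\mu|$ is bounded away from $0$. Extending $\chi$ and $\tilde\chi$ to $0$-homogeneous functions of $\xi \in \R^2\setminus\{0\}$, I would then define
\[
\widehat K(t,\xi) := \chi(\theta)\, e^{-E(i\xi_1/|\xi|)t}, \qquad \widehat{\mathcal K}(t,\xi) := \tilde\chi(\theta)\, e^{-E(i\xi_1/|\xi|)t},
\]
so that by linearity of the Fourier transform $\widehat \Gamma = \widehat K + \widehat{\mathcal K}$ and consequently $\Gamma = K + \mathcal K$. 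The identification with the formulas \eqref{eq:spectral-exp} and \eqref{eq:spectral-exp1} then follows region by region from the spectral computations already carried out: on $\mathrm{supp}\,\chi$ the value $z = i\mu$ lies in a small neighbourhood of the only exceptional point $z = 0$ of $E(z)$, so the Kato--Dunford eigenprojector expansion produces \eqref{eq:spectral-exp}; on $\mathrm{supp}\,\tilde\chi$ the eigenvalues $\lambda_\pm$ are bounded away from each other and analytic in $z$, and a direct diagonalization via the eigenvectors \eqref{eq:eigenvectors-high-freq} yields \eqref{eq:spectral-exp1}.

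The main obstacle is to ensure that the cutoff $\chi$ is placed so as to avoid the possible secondary branch point $\mu = \pm \alpha/(2N)$, at which $\lambda_+(z) = \lambda_-(z)$: if $\alpha \le 2N$ this value lies in $[-1,1]$ and the eigenprojectors become singular there, so both expansions \eqref{eq:spectral-exp}--\eqref{eq:spectral-exp1} must be interpreted with care near it. One handles this either by choosing $\delta$ so small that $\mathrm{supp}\,\chi$ stays strictly inside $\{|\mu| < \alpha/(2N)\}$ and exploiting analyticity of $e^{-E(z)t}$ as a whole to absorb the branch point in $\widehat{\mathcal K}$ (note the $(1+t)e^{-\alpha t/2}$ prefactor in \eqref{eq:spectral-exp1} already accounts for the merging of exponentials via the constants $C_{ij}$), or by inserting a third bounded-support cutoff around $\mu = \pm\alpha/(2N)$, on which $\widehat\Gamma$ decays like $e^{-\alpha t/2}$ uniformly in $\xi$ and contributes a remainder absorbable into either $\widehat K$ or $\widehat{\mathcal K}$. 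In all cases, the decomposition is independent of these cosmetic choices since it is only the leading-order behaviour of each piece that enters the linear estimates of Section \ref{Sect3}.
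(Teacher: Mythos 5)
Your proposal is correct and follows the same route the paper takes implicitly: the paper gives no separate argument for Lemma \ref{lem-decomposition} beyond juxtaposing the two spectral expansions \eqref{eq:spectral-exp} and \eqref{eq:spectral-exp1} and declaring the lemma ``then proven'', so the $0$-homogeneous angular partition of unity you introduce is exactly the missing glue, and it is the natural one since the symbol depends on $\xi$ only through $\mu=\xi_1/|\xi|$. You are in fact more careful than the source on two points. First, the explicit cutoff $\chi(\theta)$ makes the supports of $\widehat K$ and $\widehat{\mathcal K}$ well defined, which the paper leaves unspecified. Second, your identification of the eigenvalue crossing at $\mu=\pm\alpha/(2N)$ is a genuine catch: the paper asserts that $z=0$ is the only exceptional point of $E(z)$, but at $z=0$ the eigenvalues $0$ and $\alpha$ are distinct, and the actual exceptional points in Kato's sense are $z=\pm i\alpha/(2N)$, where the discriminant $\alpha^2+4N^2z^2$ vanishes; these lie in the physically relevant range $i[-1,1]$ whenever $\alpha\le 2N$. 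Your two ways of handling the crossing (a third cutoff, or absorbing the whole intermediate region into $\widehat{\mathcal K}$ using the uniform bound $\mathrm{Re}\,\lambda_\pm\ge c(\delta)>0$ away from $\mu=0$, with the $(1+t)e^{-\alpha t/2}$ prefactor accounting for the Jordan-block growth at the merging point) are both adequate, and the second is consistent with how $\mathcal K$ is actually used in Section \ref{Sect3}, namely only through its uniform exponential decay. No gap.
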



\section{Decay estimates of the linear system}\label{Sect3}

We now provide the decay rates for the linearized system. We state the following simple but useful results.

\begin{lemma}\label{lem-initial-data}

Let $r > 0$,  $0<\nu < 1$ and $g \in W^{1+\nu+r,1}(\R^2).$ Then $g \in \dot H^{r+\tilde \nu}(\R^2)$ for any $\tilde \nu < \nu$.
\end{lemma}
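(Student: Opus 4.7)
The plan is to work entirely on the Fourier side and exploit two facts: $g\in L^1(\R^2)$ gives $\widehat g\in L^\infty$, while the higher Sobolev regularity in $L^1$ gives a pointwise decay for $\widehat g$ at infinity. Since $g\in W^{1+\nu+r,1}(\R^2)$, by the definition of the Bessel-potential $W^{s,1}$-norm the function $(I-\Delta)^{(1+\nu+r)/2}g$ belongs to $L^1(\R^2)$, and taking its Fourier transform yields
\begin{align*}
\bigl(1+|\xi|^2\bigr)^{(1+\nu+r)/2}\,|\widehat g(\xi)|\;\le\;C\,\|g\|_{W^{1+\nu+r,1}},
\end{align*}
for a.e.\ $\xi\in\R^2$. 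In particular $\widehat g\in L^\infty$ and $|\widehat g(\xi)|\lesssim |\xi|^{-(1+\nu+r)}$ for $|\xi|\ge 1$.

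Next I would split the homogeneous Sobolev norm into low and high frequencies:
\begin{align*}
\|g\|_{\dot H^{r+\tilde\nu}}^{2}
=\int_{|\xi|\le 1}|\xi|^{2(r+\tilde\nu)}|\widehat g(\xi)|^{2}\,d\xi
+\int_{|\xi|>1}|\xi|^{2(r+\tilde\nu)}|\widehat g(\xi)|^{2}\,d\xi
=:I_{\mathrm{low}}+I_{\mathrm{high}}.
\end{align*}
For $I_{\mathrm{low}}$, I bound $|\widehat g(\xi)|$ by $\|\widehat g\|_{L^\infty}$ and use polar coordinates in $\R^2$: the integral reduces to $\int_0^1\rho^{2(r+\tilde\nu)+1}\,d\rho$, which is finite because $r>0$ and $\tilde\nu<\nu<1$ make the exponent $>-1$. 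For $I_{\mathrm{high}}$, I use the decay estimate above to get
\begin{align*}
I_{\mathrm{high}}\;\lesssim\;\int_{|\xi|>1}|\xi|^{2(r+\tilde\nu)-2(1+\nu+r)}\,d\xi
\;=\;\int_{|\xi|>1}|\xi|^{-2(1+\nu-\tilde\nu)}\,d\xi,
\end{align*}
which converges in dimension two precisely when $-2(1+\nu-\tilde\nu)+1<-1$, i.e.\ when $\tilde\nu<\nu$, the assumed condition.

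There is no real obstacle here: the only point to watch is that the interpretation of the fractional space $W^{1+\nu+r,1}$ must give the pointwise Fourier decay $(1+|\xi|^2)^{(1+\nu+r)/2}|\widehat g(\xi)|\in L^\infty$, which holds for the Bessel-potential (Liouville) scale and also, up to embedding, for the Sobolev–Slobodeckij scale when $\nu\in(0,1)$. With that in hand the proof is a one-line frequency splitting, and the strict inequality $\tilde\nu<\nu$ is exactly what is needed to keep the high-frequency tail integrable.
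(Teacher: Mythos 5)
Your argument is correct and follows essentially the same route as the paper: both proofs rest on the pointwise Fourier decay bound $(1+|\xi|)^{1+\nu+r}|\widehat g(\xi)|\lesssim \|g\|_{W^{1+\nu+r,1}}$ (the paper's \eqref{eq:Ws1-bound}, which you justify slightly more carefully via the Bessel-potential interpretation) followed by integration of $|\xi|^{2(r+\tilde\nu)}|\widehat g|^2$ in polar coordinates, with convergence at infinity exactly when $\tilde\nu<\nu$. Your explicit low/high frequency split is just a cosmetic repackaging of the paper's single integral against $(1+\rho^{1+\nu+r})^{-2}$.
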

\begin{proof}
If $g \in W^{1+\nu+r,1}(\R^2)$, then, in polar coordinates,
\begin{align}\label{eq:Ws1-bound}
| \widehat{g}(\theta, \rho)| \lesssim \dfrac{\|g\|_{W^{1+\nu+r}}}{1+\rho^{1+\nu+r}}. 
\end{align}
This implies that
\begin{align*}
\|g\|^2_{\dot H^{r+\tilde \nu}}&= \int_{\R^2} |\xi|^{2r+2\tilde \nu } |\widehat{g}|^2 \, d\xi  \lesssim \|g\|^2_{W^{1+\nu+r,1}} \int_0^{2\pi} \int_0^\infty \frac{\rho^{2r+2\tilde \nu +1}}{(1+\rho^{1+\nu+r})^2} \, d\theta \, d\rho \lesssim \|g\|^2_{W^{1+\nu+r,1}}.
\end{align*}
\end{proof}

\begin{lemma}\label{lem-Hs}
Let $r>0$ and $g \in \dot H^r(\R^2) \cap \dot H^{-1}(\R^2)$. Then $ g \in H^r(\R^2)$. 
\end{lemma}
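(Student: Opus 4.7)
The plan is to decompose the inhomogeneous $H^r$ norm of $g$ by a frequency cutoff and bound each piece using a different one of the two hypotheses. Writing
\[
\|g\|_{H^r}^2 = \int_{|\xi|\le 1} (1+|\xi|^2)^r |\widehat g(\xi)|^2 \, d\xi + \int_{|\xi|>1} (1+|\xi|^2)^r |\widehat g(\xi)|^2 \, d\xi,
\]
I will estimate the low-frequency integral by $\|g\|_{\dot H^{-1}}^2$ and the high-frequency integral by $\|g\|_{\dot H^r}^2$.

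For the high-frequency piece, I use the elementary inequality $(1+|\xi|^2)^r \le 2^r |\xi|^{2r}$ valid on $\{|\xi|>1\}$, which immediately gives the bound $2^r \|g\|_{\dot H^r}^2$. For the low-frequency piece, I combine $(1+|\xi|^2)^r \le 2^r$ on $\{|\xi|\le 1\}$ with the trivial inequality $1 \le |\xi|^{-2}$ on the same set, so that $|\widehat g(\xi)|^2 \le |\xi|^{-2} |\widehat g(\xi)|^2$ there, and the integral is controlled by $2^r \|g\|_{\dot H^{-1}}^2$.

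There is no substantive obstacle; the only point worth flagging is that in dimension two the weight $|\xi|^{-2}$ fails to be locally integrable near the origin, so $\dot H^{-1}(\R^2)$ is a genuine subspace requiring a suitable averaged vanishing of $\widehat g$ at $\xi=0$. The argument above never uses integrability of $|\xi|^{-2}$ by itself, only of $|\xi|^{-2}|\widehat g(\xi)|^2$, which is exactly what the assumption $g\in\dot H^{-1}(\R^2)$ provides. Summing the two contributions yields $\|g\|_{H^r}^2 \lesssim \|g\|_{\dot H^r}^2 + \|g\|_{\dot H^{-1}}^2$, which concludes the proof.
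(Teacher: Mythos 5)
Your proof is correct and follows exactly the same route as the paper's: split the inhomogeneous norm at $|\xi|=1$, control the low frequencies by $\|g\|_{\dot H^{-1}}^2$ and the high frequencies by $\|g\|_{\dot H^{r}}^2$. You simply spell out the elementary pointwise inequalities that the paper leaves implicit.
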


\begin{proof}
Let $g \in \dot H^r(\R^2) \cap \dot H^{-1}(\R^2)$ and compute

\begin{align*}
\|g\|^2_{H^r} = \int_{\R^2} (1+|\xi|^2)^r |\widehat{g}|^2 \, d\xi & = \int_{|\xi| \le 1} (1+|\xi|^2)^r |\widehat{g}|^2 \, d\xi + \int_{|\xi| \ge 1} (1+|\xi|^2)^r |\widehat{g}|^2 \, d\xi \\
& \lesssim \|g\|^2_{\dot H^{-1}} + \|g\|^2_{\dot H^r}.
\end{align*}
\end{proof}

The two results above summarize as follows: 
\begin{align}\label{eq:inclusion}
W^{1+\nu+r,1}(\R^2) \cap \dot H^{-1}(\R^2) \subset \dot H^r(\R^2) \cap \dot H^{-1}(\R^2) \subset H^r(\R^2)\cap \dot H^{-1}(\R^2). 
\end{align}


The following decay estimates are provided below. 

\begin{theorem}[Decay estimates for the linearized system]
\label{thm-decay-linear}
Consider system \eqref{eq:system-new-omega} in $\R^2$, endowed with initial data $\mathcal{U}_0=(b_0, \Omega_0) \in W^{1+\nu+r,1}$, for any $r, \nu \ge 0$. The following decay estimates hold true.
\begin{align}\label{eq:estimate_Hs}
\|b(t)\|_{H^r} & \lesssim t^{-1/4} \|b_0\|_{W^{1+\nu+r,1}}+t^{-3/4}  \|\Omega_0\|_{W^{1+\nu+r,1}},\notag\\ \|\de_x b(t)\|_{H^{r-1}} & \lesssim t^{-3/4}\|b_0\|_{W^{1+\nu+r,1}}+t^{-5/4}\|\Omega_0\|_{W^{1+\nu+r,1}}, \notag\\ 
\|\de_y b(t)\|_{H^{r-1}} & \lesssim t^{-1/4}\|b_0\|_{W^{1+\nu+r,1}}+t^{-3/4}\|\Omega_0\|_{W^{1+\nu+r,1}},\notag\\\|\Omega(t)\|_{H^r} &\lesssim t^{-3/4}\|b_0\|_{W^{1+\nu+r,1}}+t^{-5/4} \|\Omega_0\|_{W^{1+\nu+r,1}},
\notag\\
\|\de_x \Omega(t)\|_{H^{r-1}} & \lesssim t^{-5/4}\|b_0\|_{W^{1+\nu+r,1}}+t^{-7/4}\|\Omega_0\|_{W^{1+\nu+r,1}}, \notag\\
 \|\de_y \Omega(t)\|_{H^{r-1}} & \lesssim t^{-3/4}\|b_0\|_{W^{1+\nu+r,1}}+t^{-5/4}\|\Omega_0\|_{W^{1+\nu+r,1}}.\notag\\
\end{align}
\end{theorem}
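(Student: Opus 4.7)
The plan is to exploit the Green-function decomposition $\Gamma = K + \mathcal{K}$ from Lemma \ref{lem-decomposition}. The fast-decay part $\mathcal{K}$ carries exponential decay of rate at least $\alpha/2$ (up to at most polynomial corrections, as displayed in \eqref{eq:spectral-exp1}), so its contribution to each estimate is dominated by an exponential $e^{-ct}$ and therefore is absorbed into any polynomial rate $t^{-p}$. The entire analysis then reduces to the slow-decay kernel $K$ from \eqref{eq:spectral-exp}, whose entries all have the schematic form
\begin{align*}
a(\theta)\cos^{k}(\theta)\,\exp\!\Bigl(-\tfrac{N^{2}}{\alpha}\cos^{2}(\theta)\,t\Bigr),\qquad \mu=\cos\theta=\tfrac{\xi_1}{|\xi|},
\end{align*}
with $a$ smooth and bounded and $k\in\{0,1,2\}$ depending on which matrix entry of \eqref{eq:spectral-exp} we read (zero when $b$ is acted on by $b_0$, one off-diagonal, two for the $\Omega$--$\Omega_0$ entry).

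Next I would pass to Fourier polar coordinates $\xi=(\rho\cos\theta,\rho\sin\theta)$. For $u\in\{b,\Omega\}$ and any of the six quantities in \eqref{eq:estimate_Hs}, we get a bound of the shape
\begin{align*}
\|u(t)\|_{H^{r}}^{2}\lesssim \sum_{u_0\in\{b_0,\Omega_0\}}\int_{0}^{2\pi}\!\!\int_{0}^{\infty}(1+\rho^{2})^{r}\,\rho^{2j+1}|\widehat K_{ij}(t,\theta)|^{2}\,|\widehat{u_0}(\rho,\theta)|^{2}\,d\rho\,d\theta,
\end{align*}
where $j\in\{0,1\}$ according to whether a derivative $\partial_x$ or $\partial_y$ is present and the Sobolev index is shifted from $r$ to $r-1$. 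The $W^{1+\nu+r,1}$-hypothesis together with \eqref{eq:Ws1-bound} of Lemma \ref{lem-initial-data} gives the pointwise bound $|\widehat{u_0}(\rho,\theta)|\lesssim \|u_0\|_{W^{1+\nu+r,1}}/(1+\rho^{1+\nu+r})$, which separates the integrals. The radial integral $\int_{0}^{\infty}\rho^{2j+1}(1+\rho^{2})^{r}/(1+\rho^{1+\nu+r})^{2}\,d\rho$ converges at infinity thanks to the $\nu>0$ margin ($j\in\{0,1\}$ and the Sobolev index drops by one whenever $j=1$), and it is trivially finite at the origin.

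All the time decay is therefore produced by an angular integral of the form
\begin{align*}
I_{k}(t)=\int_{0}^{2\pi}\cos^{2k}(\theta)\exp\!\Bigl(-\tfrac{2N^{2}}{\alpha}\cos^{2}(\theta)\,t\Bigr)\,d\theta,
\end{align*}
where the effective $k$ is the sum of the projector-power in \eqref{eq:spectral-exp} and the power of $\cos\theta$ coming from the derivative (one extra for $\partial_x$, zero for $\partial_y$ since $\sin\theta$ is bounded). A standard Laplace expansion around the two stationary points $\theta=\pi/2,3\pi/2$, using $\cos\theta\sim \pm(\theta-\pi/2)$ and the Gaussian integral $\int_{\R}s^{2k}e^{-cts^{2}}ds\sim t^{-(2k+1)/2}$, yields $I_{k}(t)\sim t^{-(2k+1)/2}$ for $t\gg 1$. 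Taking square roots gives the decay rate $t^{-(2k+1)/4}$: this is $t^{-1/4},t^{-3/4},t^{-5/4},t^{-7/4}$ for $k=0,1,2,3$ respectively, exactly matching all six exponents in the theorem.

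The main obstacle is bookkeeping rather than conceptual: one must track precisely which power $\cos^{k}\theta$ each matrix entry in \eqref{eq:spectral-exp} carries, and add to it the extra $\cos\theta$ produced by an $\partial_x$, in order to land on the correct $I_k$ in each of the six cases. A minor technical point is the justification of the split between the slow-decay regime $|\cos\theta|\ll 1$ where \eqref{eq:spectral-exp} is valid and the complementary regime where \eqref{eq:spectral-exp1} applies; this is handled via a smooth partition of unity in $\theta$, using the Kato perturbation bounds of \cite{Kato76,BHN2007} to control the error of the eigenprojector expansion away from the exceptional point $\mu=0$. Once the cutoffs are in place and the powers of $\cos\theta$ are correctly tabulated, the rest is the Laplace-type computation above.
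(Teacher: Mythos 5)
Your proposal is correct and follows essentially the same route as the paper: decompose the Green kernel via Lemma \ref{lem-decomposition}, absorb the exponentially decaying part, pass to polar coordinates using the pointwise bound \eqref{eq:Ws1-bound} to separate the radial integral, and obtain the decay from the angular integral $\int_0^{2\pi}|\cos\theta|^{2k}e^{-c(\cos^2\theta)t}\,d\theta\sim t^{-(2k+1)/2}$, which is exactly Lemma \ref{lem-tarek}; your tabulation of the powers of $\cos\theta$ per matrix entry and per $\partial_x$/$\partial_y$ derivative matches all six exponents. The only addition beyond the paper's argument is your explicit remark on the partition of unity between the two angular regimes, which the paper leaves implicit.
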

In the course of the proof, we will rely on the result below.
\begin{lemma}[\cite{Elgindi2017}, Lemma 2.1]
\begin{align}\label{lem-tarek}
\int_0^{2\pi} |\cos(\theta)|^k e^{-(\cos^2\theta)t} \, d\theta \simeq c_k t^{-(1+k)/2} \quad \text{as   } t \rightarrow \infty.
\end{align}
\end{lemma}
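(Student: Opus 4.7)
The plan is to recognize this as a classical Laplace-type asymptotic driven by the zeros of the phase $\cos^2\theta$, which on $[0, 2\pi]$ occur at $\theta = \pi/2$ and $\theta = 3\pi/2$. First I would exploit the period-$\pi$ symmetry of the integrand together with the shift $\phi = \theta - \pi/2$ (so that $\cos\theta = -\sin\phi$) and evenness in $\phi$, reducing the problem to a neighborhood of a single quadratic zero at $\phi = 0$:
\begin{align*}
\int_0^{2\pi} |\cos\theta|^k e^{-(\cos^2\theta) t}\, d\theta = 4 \int_0^{\pi/2} (\sin\phi)^k e^{-(\sin^2\phi) t}\, d\phi.
\end{align*}

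Next I would fix a small $\delta \in (0, \pi/2)$ and split the right-hand integral at $\phi = \delta$. On $[\delta, \pi/2]$ the phase is bounded below, $\sin^2\phi \ge \sin^2\delta > 0$, so this contribution is $O(e^{-\sin^2(\delta)\, t})$, exponentially small and absorbed into the error in the $\simeq$ asymptotic. On $[0, \delta]$ I would change variables via $u = \sin\phi$, which gives $d\phi = du/\sqrt{1-u^2}$, and then rescale $u = s/\sqrt{t}$ to obtain
\begin{align*}
4\int_0^{\delta} (\sin\phi)^k e^{-(\sin^2\phi) t}\, d\phi = 4\, t^{-(k+1)/2} \int_0^{\sin(\delta)\sqrt{t}} \frac{s^k e^{-s^2}}{\sqrt{1 - s^2/t}}\, ds.
\end{align*}
Since the integrand on the right is uniformly dominated by $s^k e^{-s^2}/\sqrt{1-\sin^2\delta}$, which is integrable on $[0,\infty)$, dominated convergence identifies the leading constant
\begin{align*}
c_k \;=\; 4 \int_0^\infty s^k e^{-s^2}\, ds \;=\; 2\,\Gamma\!\left(\tfrac{k+1}{2}\right).
\end{align*}

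The only technical point worth a careful check is quantifying the correction $1/\sqrt{1-s^2/t} = 1 + O(s^2/t)$ uniformly on the truncated interval. Because $\int_0^\infty s^{k+2} e^{-s^2}\, ds < \infty$, this correction contributes at order $t^{-(k+3)/2}$, which together with the exponentially small tail from $[\delta, \pi/2]$ is strictly of lower order than $t^{-(k+1)/2}$, confirming the asymptotic equivalence $\simeq$. There is no essential obstacle here; the argument is simply the standard Laplace method applied to a phase with non-degenerate quadratic minima, with the symmetry reduction in the first step being the only step specific to the trigonometric setting.
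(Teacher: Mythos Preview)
Your argument is correct: the symmetry reduction, the substitution $u=\sin\phi$ followed by the rescaling $u=s/\sqrt{t}$, and the dominated-convergence/error analysis are all valid and yield exactly the stated asymptotic with $c_k=2\,\Gamma((k+1)/2)$. This is precisely the ``suitable change of variables'' the paper alludes to (deferring the details to \cite{KZuazua2011, Elgindi2017}), so your approach is essentially the same as the paper's, only carried out in full.
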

\begin{proof}
A suitable change of variables, which can be found for instance in \cite{KZuazua2011, Elgindi2017}, allows to conclude.
\end{proof}


\begin{proof}[Proof of Theorem \ref{thm-decay-linear}]
We start with the $H^r$-estimate. The proof is divided in three main steps:
\begin{itemize}
\item first, we apply the decomposition given by Lemma \ref{lem-decomposition};
\item next, we pass to polar coordinates and use inequality \eqref{eq:Ws1-bound};
\item finally, we apply Lemma \ref{lem-tarek}.
\end{itemize}
\begin{align*}
\|b(t)\|_{H^r}^2 &\lesssim \int_{\R^2} (1+|\xi|^{2})^r e^{-\frac{CN^2}{\alpha^2} \frac{\xi_1^2}{|\xi|^2}t} \left[|\widehat b_0(\xi)|^2 + \frac{\xi_1^2}{|\xi|^2} |\widehat{\Omega}_0(\xi)|^2\right] \, d\xi\\
& \lesssim \int_0^{2\pi} \int_0^\infty (1+\rho^2)^r \rho \, e^{-\frac{CN^2}{\alpha^2}(\cos^2 \theta) t} \left[ |\widehat b_0(\theta, \rho)|^2 + \cos^2(\theta) |\widehat{\Omega}_0 (\theta, \rho)|^2 \right] \, d\theta \, d\rho\\
& \lesssim  \|b_0\|^2_{W^{1+\nu+r,1}} \int_0^{2\pi} e^{-\frac{CN^2}{\alpha^2}(\cos^2 \theta) t} \, d\theta \int_0^\infty  \frac{(1+\rho^2)^r \rho}{(1+\rho^{1+\nu+r})^2} \, d\rho \\
& \quad + \|\Omega_0\|^2_{W^{1+\nu+r,1}} \int_0^{2\pi} (\cos^2\theta) \, e^{-\frac{CN^2}{\alpha^2}(\cos^2 \theta) t} \, d\theta \int_0^\infty  \frac{(1+\rho^2)^r \rho}{(1+\rho^{1+\nu+r})^2} \, d\rho \\
&\lesssim t^{-1/2} \|b_0\|^2_{ W^{1+\nu+r,1}} + t^{-3/2} \|\Omega_0\|^2_{ W^{1+\nu+r,1}}.
\end{align*}

Next, we consider the derivatives.
\begin{align*}
\|\de_x b(t)\|_{H^{r-1}}^2 &\lesssim \int_{\R^2} |\xi_1|^2 (1+|\xi|^{2})^{r-1} e^{-\frac{CN^2}{\alpha^2} \frac{\xi_1^2}{|\xi|^2}t} \left[|\widehat b_0(\xi)|^2 + \frac{\xi_1^2}{|\xi|^2} |\widehat{\Omega}_0(\xi)|^2\right] \, d\xi\\
& \lesssim \int_0^{2\pi} \int_0^\infty (1+\rho^2)^{r-1} \rho^3 \, (\cos^2 \theta) \, e^{-\frac{CN^2}{\alpha^2}(\cos^2 \theta) t} \left[ |\widehat b_0(\theta, \rho)|^2 + (\cos^2\theta) |\widehat{\Omega}_0 (\theta, \rho)|^2 \right] \, d\theta \, d\rho\\
& \lesssim  \|b_0\|^2_{W^{1+\nu+r,1}} \int_0^{2\pi} (\cos^2\theta) e^{-\frac{CN^2}{\alpha^2}(\cos^2 \theta) t} \, d\theta \int_0^\infty  \frac{(1+\rho^2)^{r-1} \rho^3}{(1+\rho^{1+\nu+r})^2} \, d\rho \\
& \quad + \|\Omega_0\|^2_{W^{1+\nu+r,1}} \int_0^{2\pi} (\cos^4\theta) \, e^{-\frac{CN^2}{\alpha^2}(\cos^2 \theta) t} \, d\theta \int_0^\infty  \frac{(1+\rho^2)^{r-1} \rho^3}{(1+\rho^{1+\nu+r})^2} \, d\rho \\
&\lesssim t^{-3/2} \|b_0\|^2_{ W^{1+\nu+r,1}} + t^{-5/2} \|\Omega_0\|^2_{ W^{1+\nu+r,1}}.
\end{align*}
The remaining estimates are analogous. Notice that since the dispersion relation (the eigenvalues) is homogeneous of degree zero, the $x$-derivative enhances time-decay, at the price of regularity.
\end{proof}


\section{Decay estimates for the nonlinear system}\label{Sect4}
Now we deal with the nonlinear model, which in terms of the diagonalized variables reads as follows:
\begin{equation} \label{eq:system-nonlinear}
\begin{aligned}
\partial_t b - N (-\Delta)^{-1/2} \partial_x \Omega&=N^{-1}( (-\Delta)^{-1/2} \de_y\Omega)\de_x b - N^{-1}( (-\Delta)^{-1/2} \de_x\Omega) \de_y b,\\
\partial_t \Omega - N (-\Delta)^{-1/2} \partial_x b +  \alpha \Omega &=N^{-1}( (-\Delta)^{-1/2} \de_y\Omega)\de_x \Omega - N^{-1}( (-\Delta)^{-1/2} \de_x\Omega) \de_y \Omega\\
&\quad + N^{-1}[(-\Delta)^{-1/2}, (-\Delta)^{-1/2} \de_y \Omega] (-\Delta)^{1/2}  \de_x\Omega\\
&\quad - N^{-1} [(-\Delta)^{-1/2}, (-\Delta)^{-1/2} \de_x \Omega] (-\Delta)^{1/2} \de_y\Omega.
\end{aligned}
\end{equation}

The global in time well-posedness of system \eqref{eq:system-nonlinear} has been established in \cite{Wan2019} in the framework of homogeneous Sobolev spaces $\dot H^s \cap \dot H^{-1}$ with $s\ge 6$ in the whole $\R^2$ domain.
The same type of results for non-homogeneous Sobolev spaces (in terms of the velocity variable) in a bounded domain (the periodic strip $\mathbb{T} \times [-1,1]$ with no-slip conditions) can be found in \cite{CastroCordoba2019} and are based on a different strategy.

Starting from the results of \cite{Wan2019}, where the global in time well-posedness of solutions to system \eqref{eq:system-nonlinear} in homogeneous Sobolev spaces is obtained, here we provide explicit decay rates of the smooth solutions. We first state the global in time existence result due to Wan \cite{Wan 2019}.
\begin{theorem}[\cite{Wan2019}, Theorem 1.1]\label{thm:wan}
Let $s \ge 6$. Consider the initial data $(\Omega_0, b_0)$ such that $(b_0, \Omega_0) \in \dot{H}^{s}(\R^2)\cap \dot H^{-1}(\R^2)$. Denote by
\begin{align*}
\tilde{\mathcal{E}}_0=\|b_0\|_{\dot{H}^{s}(\R^2)\cap \dot H^{-1}(\R^2)} + \|\Omega_0\|_{\dot{H}^{s}(\R^2)\cap \dot H^{-1}(\R^2)}.
\end{align*}
There exists $\varepsilon_0 >0$ small enough such that, if $\tilde{\mathcal{E}}_0 \le \varepsilon_0$, then system (\ref{eq:system-nonlinear})
admits a unique global in time solution $\mathcal{U}(t)=(b(t), \Omega(t))  \in  \dot{H}^{s}(\R^2)\cap \dot H^{-1}(\R^2)$. In particular,
\begin{align*}
\sup_{ t} \|b(t)\|_{\dot{H}^{s}(\R^2)\cap \dot H^{-1}(\R^2)} +  \|\Omega(t)\|_{\dot{H}^{s}(\R^2)\cap \dot H^{-1}(\R^2)} \lesssim \tilde{\mathcal{E}}_0.
\end{align*}
\end{theorem}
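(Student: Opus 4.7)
The plan is to prove Theorem \ref{thm:wan} via a standard continuation argument based on a priori energy estimates in the scale $\dot H^s \cap \dot H^{-1}$, exploiting the partial damping $-\alpha\Omega$ in the second equation of \eqref{eq:system-nonlinear}. Local-in-time well-posedness follows from a Friedrichs mollification of the quasilinear system: the mollified problem is an ODE in $H^s$, and passage to the limit requires only the uniform energy estimates below; uniqueness then follows by standard energy methods on the difference of two solutions. The bulk of the argument is to show that if $\tilde{\mathcal E}_0 \le \eps_0$ is small enough, the functional $\mathcal X(t):=\|b(t)\|_{\dot H^s \cap \dot H^{-1}}^2+\|\Omega(t)\|_{\dot H^s \cap \dot H^{-1}}^2$ stays bounded by, say, $4\tilde{\mathcal E}_0^2$ for all $t\ge 0$, so that the local solution extends globally.

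For the high-regularity estimate I apply $\Lambda^s:=(-\Delta)^{s/2}$ to both equations of \eqref{eq:system-nonlinear} and test against $\Lambda^s b$ and $\Lambda^s \Omega$, respectively. The linear coupling $N\Lambda^{-1}\partial_x$ is antisymmetric between the two equations, so its contribution cancels when the two identities are added, while the damping produces the good term $\alpha\|\Omega\|_{\dot H^s}^2$. The transport nonlinearities $\uu\cdot\nabla b$ and $\uu\cdot\nabla\Omega$ are handled by Kato--Ponce commutator estimates, using that $\uu$ is a Riesz transform of $\Omega$ (hence $\|\uu\|_{H^{s+1}}\lesssim \|\Omega\|_{H^s}$) together with $\nabla\cdot\uu=0$; the zero-order Calderón-type commutators $[\Lambda^{-1/2},\,\cdot\,]\,\cdot$ appearing in the $\Omega$-equation admit analogous bounds. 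A parallel $\dot H^{-1}$ estimate is obtained by testing against $\Lambda^{-2}b$ and $\Lambda^{-2}\Omega$, rewriting $\uu\cdot\nabla f=\nabla\cdot(\uu f)$ to gain one derivative back, and then bounding $\|\uu f\|_{L^2}\lesssim \|\uu\|_{L^\infty}\|f\|_{L^2}$ via Sobolev embedding and interpolation between $\dot H^{-1}$ and $\dot H^s$. Combining all the pieces yields a differential inequality of the schematic form
\begin{equation*}
\tfrac{d}{dt}\mathcal X(t) + 2\alpha \|\Omega(t)\|_{\dot H^s \cap \dot H^{-1}}^2 \lesssim \sqrt{\mathcal X(t)}\, \mathcal X(t),
\end{equation*}
from which a standard bootstrap argument closes global existence.

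The main obstacle is closing the $b$-estimate in the complete absence of dissipation on $b$ itself. The key structural observation is that every transport nonlinearity of the form $\uu\cdot\nabla b$ carries the velocity $\uu\sim \Lambda^{-1}\nabla^\perp\Omega$, so the gain factor $\sqrt{\mathcal X}$ in the differential inequality above is in fact $\|\Omega\|_{H^s}$ and never $\|b\|_{H^s}$. After one Hölder inequality in time, the resulting integral $\int_0^t\|\Omega\|_{H^s}\mathcal X\,ds$ benefits from the $L^2_t$ integrability of $\|\Omega\|_{H^s}$ coming from the damping dissipation, and the smallness of $\tilde{\mathcal E}_0$ is enough to absorb it. The subtle point, which must be checked term by term in both the $\dot H^s$ and the $\dot H^{-1}$ estimates, is that every nonlinear occurrence of a derivative of $b$ indeed comes multiplied by a factor depending on $\Omega$ alone, and never on $b$ by itself; this is precisely the structural property of a partially dissipative hyperbolic system emphasized after \eqref{eq:dissip1}, and once verified the bootstrap closes and produces the global smooth solution claimed in Theorem \ref{thm:wan}.
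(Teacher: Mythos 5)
A preliminary remark on what you are being compared against: the paper does not prove Theorem \ref{thm:wan} at all --- it is quoted verbatim from \cite{Wan2019} and used as a black box (the paper's own contribution is the set of decay rates in Theorem \ref{thm:decay-nonlinear}, built on top of it). So there is no internal proof to compare with, and your proposal has to stand on its own merits.

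It does not, because of the mechanism you invoke to close the estimate for $b$. The $b$-equation carries no dissipation, so the $\dot H^s$ energy estimate produces, after the Kato--Ponce commutator step you describe (using $\nabla\cdot\uu=0$ to kill the top-order term), a Gronwall inequality of the schematic form $\frac{d}{dt}\|b\|_{\dot H^s}^2\lesssim \|\nabla\uu\|_{L^\infty}\,\|b\|_{\dot H^s}^2+\dots$, hence a factor $\exp\bigl(C\int_0^t\|\nabla\uu(\tau)\|_{L^\infty}\,d\tau\bigr)$. You propose to control this using the $L^2_t$ integrability of $\|\Omega\|_{H^s}$ supplied by the damping. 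But H\"older in time only gives $\int_0^t\|\Omega(\tau)\|_{H^s}\,d\tau\le t^{1/2}\|\Omega\|_{L^2_tH^s}\lesssim \eps_0\,t^{1/2}$, which is unbounded as $t\to\infty$; the resulting bound $\mathcal{X}(t)\le \mathcal{X}(0)\exp(C\eps_0\sqrt t)$ grows without limit, and the bootstrap $\mathcal{X}(t)\le 4\tilde{\mathcal{E}}_0^2$ cannot be closed no matter how small $\eps_0$ is. Your structural observation that every occurrence of $\nabla b$ is multiplied by a function of $\Omega$ alone is correct but insufficient: what is actually needed is $\|\nabla\uu\|_{L^\infty}\in L^1_t$, i.e.\ decay of $\nabla\uu$ strictly faster than $t^{-1}$, and this is precisely what the energy/damping structure alone cannot provide --- the hypocoercive (Shizuta--Kawashima-type) cross term only dissipates the anisotropic quantity $\|\mathcal{R}_1 b\|$, which degenerates on the manifold $\xi_1=0$, exactly as the introduction of the paper explains. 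The proof in \cite{Wan2019} therefore has to combine the energy estimates with the anisotropic Green's-function decay rates (of the type $\|\partial_x\Omega(t)\|\sim t^{-5/4}$) through a Duhamel bootstrap in time-weighted norms; this is also why the hypotheses involve the negative Sobolev space $\dot H^{-1}$, which plays no role in a pure energy argument but is essential for the low-frequency part of the decay estimates. Your sketch is missing this entire ingredient.
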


Now we state our result in terms of the new unknown variables $(b, \Omega)$ of system \eqref{eq:system-nonlinear}. 
\begin{theorem}[Decay rates for the nonlinear system]\label{thm:decay-nonlinear} 
Let $s \ge 6, \nu_0 \ge 0$ and let $\eps_0>0$ as in the statement of Theorem \ref{thm:wan}. Consider the unique global in time solution $\mathcal{U}(t)=(b(t), \Omega(t))$ to system \eqref{eq:system-nonlinear} with initial data $(b_0, \Omega_0) \in W^{1+\nu_0 + s, 1} \cap \dot H^{-1}$, such that
\begin{align*}
\mathcal{E}_0:=\|b_0\|_{W^{1+\nu_0 + s, 1} \cap \dot H^{-1}} + \|\Omega_0\|_{W^{1+\nu_0 + s, 1} \cap \dot H^{-1}} < C^{-1} \eps_0,
\end{align*}
where $C>0$ as in Theorem \ref{thm:decay-nonlinear-intro}.
Then, for any $1 \le \sigma \le s-2$, the following nonlinear decay estimates hold true:
\begin{align*}
 \|b(t)\|_{H^\sigma} & \lesssim t^{-\frac 1 4} \mathcal{E}_0, \quad \|\Omega(t)\|_{H^\sigma}  \lesssim t^{-\frac 3 4} \mathcal{E}_0.
\end{align*}
\begin{align*}
\|\de_x b(t)\|_{H^{\sigma-1}}  & \lesssim t^{-\frac 3 4} \mathcal{E}_0, \quad \|\de_x \Omega(t)\|_{H^{\sigma-1}}\lesssim t^{-\frac 5 4} \mathcal{E}_0.
\end{align*}
\end{theorem}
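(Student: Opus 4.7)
Our plan is to close a bootstrap argument based on the Duhamel representation of \eqref{eq:system-nonlinear}, combining the linear decay supplied by Theorem \ref{thm-decay-linear} with bilinear estimates on the nonlinearity. By Theorem \ref{thm:wan}, the smallness $\tilde{\mathcal E}_0 \le C\mathcal E_0$ already yields a unique global solution satisfying $\|b(\tau)\|_{\dot H^s\cap \dot H^{-1}} + \|\Omega(\tau)\|_{\dot H^s\cap \dot H^{-1}}\lesssim \mathcal E_0$ uniformly in $\tau$, and we use this high-regularity control as a reservoir to bound the "high" factor in each bilinear term. We introduce the working norm
\begin{align*}
M(t) &:= \sup_{0\le \tau\le t} \Bigl\{(1+\tau)^{1/4}\|b(\tau)\|_{H^\sigma} + (1+\tau)^{3/4}\|\Omega(\tau)\|_{H^\sigma} \\
& \qquad\qquad + (1+\tau)^{3/4}\|\partial_x b(\tau)\|_{H^{\sigma-1}} + (1+\tau)^{5/4}\|\partial_x \Omega(\tau)\|_{H^{\sigma-1}}\Bigr\},
\end{align*}
so that the conclusion amounts to $M(t)\lesssim \mathcal E_0$ for all $t\ge 0$.

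Writing \eqref{eq:system-nonlinear} schematically as $\partial_t \mathcal U = L\mathcal U + \mathcal N(\mathcal U)$ with $\mathcal U = (b,\Omega)$, Duhamel's formula yields
\begin{align*}
\mathcal U(t) = e^{tL}\mathcal U_0 + \int_0^t e^{(t-\tau)L}\mathcal N(\mathcal U(\tau))\,d\tau.
\end{align*}
The linear term is handled directly by Theorem \ref{thm-decay-linear} applied with $r\in\{\sigma-1,\sigma\}$ together with the inclusion \eqref{eq:inclusion}, producing a contribution $\lesssim\mathcal E_0$ in each of the four components of $M(t)$. For the Duhamel integral we re-apply Theorem \ref{thm-decay-linear} on $[\tau,t]$ with "initial data" $\mathcal N(\mathcal U(\tau))$, reducing the task to controlling $\|\mathcal N(\mathcal U(\tau))\|_{W^{1+\nu_0+r,1}}$ by a sufficiently integrable negative power of $(1+\tau)$.

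The nonlinear bound rests on the Leibniz/Kato--Ponce inequality
\begin{align*}
\|fg\|_{W^{k,1}(\R^2)}\lesssim \|f\|_{H^k}\|g\|_{L^2} + \|f\|_{L^2}\|g\|_{H^k},
\end{align*}
together with the $L^2$-boundedness of the zero-order Riesz operators $(-\Delta)^{-1/2}\partial_j$ appearing in the quadratic terms. For each contribution we place the factor on which all $k=1+\nu_0+r$ derivatives fall at Wan's uniform level $\dot H^s$ (bounded by $\mathcal E_0$), and the companion low-regularity factor at a norm controlled by $M(t)$; standard interpolation between $\dot H^\sigma$ and $\dot H^s$ handles intermediate indices. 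In every case this produces a bound of the form
\begin{align*}
\|\mathcal N(\mathcal U(\tau))\|_{W^{1+\nu_0+r,1}}\lesssim \mathcal E_0\,M(t)\,(1+\tau)^{-\beta}
\end{align*}
with $\beta \ge 3/2$, since the companion low-regularity factor always involves one of $\|\Omega\|_{H^\sigma}$, $\|\partial_x b\|_{H^{\sigma-1}}$ or $\|\partial_x\Omega\|_{H^{\sigma-1}}$, which can be paired with another decaying quantity. Inserting this into the Duhamel estimate and applying the convolution inequality $\int_0^t(t-\tau)^{-a}(1+\tau)^{-b}\,d\tau\lesssim (1+t)^{-a}$ valid for $0<a<1$, $b>1$, the nonlinear contribution to $M(t)$ is at most $C\mathcal E_0 M(t)$, so the bootstrap closes as $M(t)\le C\mathcal E_0 + C\mathcal E_0 M(t)$ under the smallness of $\mathcal E_0$.

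The main obstacle will be the two commutator terms in the $\Omega$-equation of \eqref{eq:system-nonlinear}: the presence of $(-\Delta)^{1/2}\partial_j\Omega$ naively signals a loss of one derivative relative to the transport-type nonlinearities. This is resolved by a Coifman--Meyer/Kato--Ponce commutator estimate exploiting the fact that $[(-\Delta)^{-1/2},f]$ gains one full derivative on smooth $f$; after this gain the commutators exhibit the same bilinear structure as the transport terms and fit into the same framework. A secondary technical point is that the bilinear estimate requires the top Sobolev index $2+\nu_0+\sigma$ to remain below $s$, which at the endpoint $\sigma=s-2$ forces $\nu_0$ small; since $\nu_0\ge 0$ is a free parameter in the hypotheses, this is harmless.
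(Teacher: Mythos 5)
Your overall architecture --- Duhamel's formula, the linear decay of Theorem \ref{thm-decay-linear} applied to the nonlinearity measured in $W^{1+\nu_0+r,1}$, the Kato--Ponce bilinear estimate of Lemma \ref{lem-bilinear-estimate}, and a bootstrap functional $M(t)$ identical to the paper's $\mathcal{M}_\sigma(t)$ --- is exactly the paper's. The gap is in the decay bookkeeping for the nonlinear terms, which is the heart of the proof. Your stated strategy is to put the factor carrying all $1+\nu_0+r$ derivatives at Wan's uniform level $\dot H^s$ and the companion factor at an $M(t)$-controlled norm. But Wan's bound carries \emph{no} time decay, so under this strategy the product decays only as fast as the companion factor: in the worst term one gets $\|(-\Delta)^{-1/2}\partial_y\Omega\|_{H^{1+\nu_0+\sigma}}\|\partial_x b\|_{L^2}+\|(-\Delta)^{-1/2}\partial_y\Omega\|_{L^2}\|\partial_x b\|_{H^{1+\nu_0+\sigma}}\lesssim \mathcal E_0\, M(t)\,\tau^{-3/4}$, i.e. $\beta=3/4$, not $\beta\ge 3/2$. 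Your claimed $\beta\ge 3/2$ would require measuring \emph{both} factors in $M(t)$-controlled norms, but those stop at $H^\sigma$ (resp. $H^{\sigma-1}$), more than one derivative short of the $H^{1+\nu_0+\sigma}$ needed to absorb the Leibniz derivatives; you cannot have both full decay and the full derivative count simultaneously. With $\beta=3/4<1$ your convolution inequality does not apply, and $\int_0^t(t-\tau)^{-1/4}\min\{1,\tau^{-3/4}\}\,d\tau$ is merely bounded, so the bootstrap closes for boundedness but not for the rates $t^{-1/4}$, $t^{-3/4}$, $t^{-5/4}$ that the theorem asserts.

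The paper resolves precisely this tension by interpolating the derivative-heavy factor, $\|f\|_{H^{1+\nu+r}}\lesssim\|f\|_{H^{s_0}}^{\theta}\|f\|_{H^{s_1}}^{1-\theta}$ with $s_0\in\{r-1,r\}$ at a decaying, $\mathcal{M}_r$-controlled level and $s_1$ at the uniformly bounded level, thereby retaining the fraction $\theta$ of the time decay; choosing $\theta=1/3$ (or $\theta=3/5$, depending on the term) makes the Duhamel integrand decay like $\tau^{-1}$ --- still not $\tau^{-3/2}$ --- and Lemma \ref{lem-BHN} with $\phi=\min\{\gamma,\kappa,\gamma+\kappa-1\}$ then returns the linear rate. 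This produces the superlinear closing inequality $\mathcal{M}_r\lesssim\mathcal{E}_0+\mathcal{E}_0^{2/3}\mathcal{M}_r^{4/3}+\mathcal{E}_0^{2/5}\mathcal{M}_r^{8/5}$ rather than your linear one. The same choice forces the top interpolation index $s_1=2+\tfrac32\nu+r$ to stay within the available regularity, which is where the restriction $\sigma\le s-2$ and the condition $\nu<\nu_0/4$ come from; your closing remark that the constraint on $\nu_0$ is ``harmless'' misses that the loss of exactly two derivatives in the statement is dictated by this tuning of $\theta$. In short, you mention interpolation only as a device to ``handle intermediate indices,'' whereas it is the mechanism that makes the decay rates close; without specifying $\theta$ and tracking the resulting exponents, the argument does not go through.
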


\begin{proof}[Proof of Theorem \ref{thm:decay-nonlinear-intro}]
We just observe that Theorem \ref{thm:decay-nonlinear-intro} simply follows by Theorem \ref{thm:decay-nonlinear} and the change of variable $\Omega=N(-\Delta)^{-1/2} \omega$ in \eqref{eq:change-variables}.
\end{proof}

\begin{remark}\label{rmk-thm}
The global in time result of Theorem \ref{thm:wan}, which holds under the assumption $\tilde{\mathcal{E}}_0:=\|b_0\|_{\dot H^s \cap \dot H^{-1}} + \|\Omega_0\|_{\dot H^s \cap \dot H^{-1}} < \eps_0$ can be found in \cite{Wan2019}, while the decay rates are obtained in the present paper. The assumptions on the initial data stated in Theorem \ref{thm:decay-nonlinear} are slightly stronger than the ones of Theorem \ref{thm:wan}. In Theorem \ref{thm:decay-nonlinear} we take indeed $\mathcal{U}_0=(b_0, \Omega_0) \in W^{1+\nu+s,1} \cap \dot H^{-1} \subset \dot {H^s} \cap \dot H^{-1}$. This additional requirement is due to the fact that the space $W^{r,1}$ is crucial for the decay estimates of Theorem \ref{thm-decay-linear}. This hypothesis was also used in \cite{Elgindi2015}, while its necessity has been recently overcome in \cite{Wan2020} by means of an additional amount of technicality involving Strichartz estimates. 
\end{remark}

\begin{proof}[Proof of Theorem \ref{thm:decay-nonlinear}]
We write the Duhamel formula for system \eqref{eq:system-nonlinear}, which reads
\begin{align*}
\begin{pmatrix}
b(t)\\
\Omega(t)
\end{pmatrix}=\Gamma(t)\begin{pmatrix}
b_0(x)\\
\Omega_0(x)
\end{pmatrix} & + \int_0^t \Gamma(t-\tau)\begin{pmatrix}
N^{-1}((-\Delta)^{-1/2} \de_y\Omega)\de_x b- N^{-1}((-\Delta)^{-1/2} \de_x\Omega) \de_y b\\
N^{-1}((-\Delta)^{-1/2} \de_y\Omega)\de_x \Omega - N^{-1}((-\Delta)^{-1/2} \de_x \Omega) \de_y \Omega
\end{pmatrix}\\
& + \int_0^t \Gamma(t-\tau)\begin{pmatrix}
0\\
N^{-1}[(-\Delta)^{-1/2}, (-\Delta)^{-1/2} \de_y\Omega]  (-\Delta)^{1/2} \de_x\Omega
\end{pmatrix}\\
& - \int_0^t \Gamma(t-\tau)\begin{pmatrix}
0\\
N^{-1} [(-\Delta)^{-1/2}, (-\Delta)^{-1/2}  \de_x\Omega] (-\Delta)^{1/2} \de_y\Omega
\end{pmatrix}\\
&=:\Gamma(t)\begin{pmatrix}
b_0(x)\\
\Omega_0(x)
\end{pmatrix} + \int_0^t \Gamma(t-\tau)\begin{pmatrix}
S_b\\
S_\Omega
\end{pmatrix},
\end{align*}

where we recall from Lemma \ref{lem-decomposition} that, denoting by $\mathcal{R}_j=\frac{\xi_j}{|\xi|}$ the Riesz transform, the principal part of $\Gamma(t)$ is given by
\begin{align*}
\Gamma(t)\simeq\mathcal{F}^{-1} \begin{pmatrix}
e^{-\mathcal{R}_1^2(\xi)t}
 & |\mathcal{R}_1(\xi)|e^{-\mathcal{R}_1^2(\xi)t}
\\
|\mathcal{R}_1(\xi)| e^{-\mathcal{R}_1^2(\xi)t}
& |\mathcal{R}_1(\xi)|^2e^{-\mathcal{R}_1^2(\xi)t}
\end{pmatrix}.
\end{align*}

We apply the fractional derivative $D_\x^r$ for $r=0, \cdots, s$ and integrate in space, so that, using the estimates \eqref{eq:estimate_Hs} of Theorem \ref{thm-decay-linear} with
\begin{align}\label{ineq:nu}
\nu < \frac{\nu_0}{4},
\end{align}

so that

\begin{align*}
\begin{pmatrix}
\|D_\x^r b(t)\|_{L^2}\\
\|D_\x^r \Omega(t)\|_{L^2}
\end{pmatrix}
& \lesssim 
\begin{pmatrix}
\min\{1, t^{-1/4}\} \|b_0\|_{W^{1+\nu+r,1}}+\min\{1, t^{-3/4}\} \|\Omega_0\|_{W^{1+\nu+r,1}}\\
\min\{1, t^{-3/4}\} \|b_0\|_{W^{1+\nu+r,1}}+\min\{1, t^{-5/4}\} \|\Omega_0\|_{W^{1+\nu+r,1}}
\end{pmatrix}\\
&\quad+\int_0^t \begin{pmatrix}
\|D_\x^r [\Gamma_{11} (t-\tau)*S_b] \|_{L^2} + \| D_\x^r  [\Gamma_{12} (t-\tau)*S_\Omega] \|_{L^2}  \\
\|D_\x^r [\Gamma_{21} (t-\tau)*S_b] \|_{L^2} + \| D_\x^r [\Gamma_{22} (t-\tau)*S_\Omega] \|_{L^2}  \\
\end{pmatrix},
\end{align*}
and so we obtain

\begin{align*}
\begin{pmatrix}
\|D_\x^r b(t)\|_{L^2}\\
\|D_\x^r\Omega(t)\|_{L^2}
\end{pmatrix} & \lesssim \begin{pmatrix}
\min\{1, t^{-1/4}\} \|b_0\|_{W^{1+\nu+r,1}}+\min\{1, t^{-3/4}\} \|\Omega_0\|_{W^{1+\nu+r,1}}\\
\min\{1, t^{-3/4}\} \|b_0\|_{W^{1+\nu+r,1}}+\min\{1, t^{-5/4}\} \|\Omega_0\|_{W^{1+\nu+r,1}}\\
\end{pmatrix}\\
&\quad +\int_0^t \begin{pmatrix}
(t-\tau)^{-1/4} \|S_b\|_{W^{1+\nu+r,1}} + (t-\tau)^{-3/4} \|S_\Omega\|_{W^{1+\nu+r,1}} 
 \\
(t-\tau)^{-3/4} \|S_b\|_{W^{1+\nu+r,1}} + (t-\tau)^{-5/4} \|S_\Omega\|_{W^{1+\nu+r,1}} 
\end{pmatrix}.\\
\end{align*}
More explicitly, one has that
\begin{align*}
&\begin{pmatrix}
\|D_\x^r b(t)\|_{L^2}\\
\|D_\x^r \Omega(t)\|_{L^2}
\end{pmatrix} \lesssim \begin{pmatrix}
\min\{1, t^{-1/4}\} \|b_0\|_{W^{1+\nu+r,1}}+\min\{1, t^{-3/4}\} \|\Omega_0\|_{W^{1+\nu+r,1}}\\
\min\{1, t^{-3/4}\} \|b_0\|_{W^{1+\nu+r,1}}+\min\{1, t^{-5/4}\} \|\Omega_0\|_{W^{1+\nu+r,1}}
\end{pmatrix} \\
&\qquad +\int_0^t \begin{pmatrix}
(t-\tau)^{-1/4} \left[\|(-\Delta)^{-1/2} \de_y\Omega \cdot \de_x b\|_{W^{1+\nu+r,1}}+ \| (-\Delta)^{-1/2} \de_x\Omega \cdot \de_y b\|_{W^{1+\nu+r,1}}\right]\\
(t-\tau)^{-3/4} \left[\| (-\Delta)^{-1/2} \de_y\Omega \cdot \de_x b\|_{W^{1+\nu+r,1}}+\|(-\Delta)^{-1/2} \de_x  \Omega \cdot \de_y b\|_{W^{1+\nu+r,1}}\right]\\
\end{pmatrix} \quad (i)\\
&\qquad +\int_0^t \begin{pmatrix}
(t-\tau)^{-3/4} \left[\| (-\Delta)^{-1/2} \de_x\Omega \cdot \de_y \Omega\|_{W^{1+\nu+r,1}} +\| (-\Delta)^{-1/2} \de_y\Omega \cdot \de_x \Omega\|_{W^{1+\nu+r,1}}\right]\\
(t-\tau)^{-5/4} \left[\| (-\Delta)^{-1/2} \de_x\Omega \cdot \de_y \Omega\|_{W^{1+\nu+r,1}} + \| (-\Delta)^{-1/2} \de_y\Omega \cdot \de_x \Omega\|_{W^{1+\nu+r,1}}\right]
\end{pmatrix} \quad (ii)\\
&\qquad + \int_0^t \begin{pmatrix}
(t-\tau)^{-3/4} \|[(-\Delta)^{-1/2},(-\Delta)^{-1/2}  \de_y \Omega] (-\Delta)^{1/2}\de_x\Omega\|_{W^{1+\nu+r,1}} \\
(t-\tau)^{-5/4} \|[(-\Delta)^{-1/2}, (-\Delta)^{-1/2} \de_y \Omega] (-\Delta)^{1/2}\de_x\Omega\|_{W^{1+\nu+r,1}} \\
\end{pmatrix}\quad (iii)\\
&\qquad + \int_0^t \begin{pmatrix}
(t-\tau)^{-3/4} \|[(-\Delta)^{-1/2}, (-\Delta)^{-1/2}  \de_x\Omega] (-\Delta)^{1/2}\de_y\Omega\|_{W^{1+\nu+r,1}} \\
(t-\tau)^{-5/4} \|[(-\Delta)^{-1/2}, (-\Delta)^{-1/2} \de_x \Omega] (-\Delta)^{1/2}\de_y\Omega\|_{W^{1+\nu+r,1}}
\end{pmatrix}\quad (iv).
\end{align*}

We focus on the worst case, which is the second line (i). We apply Lemma \ref{lem-bilinear-estimate} in the Appendix, 
\begin{equation}\label{eq:first-line}
\begin{aligned}
\|\de_y (-\Delta)^{-1/2} \Omega \cdot \de_x b\|_{ W^{1+\nu+r, 1}} & \lesssim \|\de_y (-\Delta)^{-1/2}\Omega \|_{ H^{1+\nu+r}} \|\de_x b\|_{L^2} + \|\de_y (-\Delta)^{-1/2}\Omega \|_{L^2} \|\de_x b\|_{ H^{1+\nu+r}}.
\end{aligned}
\end{equation}

Now for $\sigma \ge 1$ define the functional
\begin{align}
\mathcal{M}_\sigma(t)&=\sup_{\tau \ge 1}  \{\tau^{1/4}\|b\|_{ H^\sigma} + \tau^{3/4} \|\Omega\|_{ H^\sigma} +\tau^{3/4}\|\de_x b\|_{ H^{\sigma-1}} + \tau^{5/4} \|\de_x \Omega\|_{ H^{\sigma-1}}\}.
\label{def:energy-funcional}
\end{align}

We appeal to the Interpolation Lemma \ref{lem-gagliardo}, see the Appendix below, where $s=1+\nu+r$. In order to optimize the regularity requirements, our strategy is to minimize with respect to $\theta$ in Lemma \ref{lem-gagliardo}. More precisely, using the functional defined above, notice that
\begin{align}\label{eq:estimate1}
 \|\de_y (-\Delta)^{-1/2}\Omega \|_{ H^{1+\nu+r}} & \lesssim  \|\de_y (-\Delta)^{-1/2}\Omega \|_{ H^{s_0}}^\theta  \|\de_y (-\Delta)^{-1/2}\Omega \|_{ H^{s_1}}^{1-\theta} \lesssim \tau^{-3\theta /4} \mathcal{E}_0^{(1-\theta)}\mathcal{M}_r^{\theta}(t).
\end{align}

Similarly, one obtains that

\begin{align}\label{eq:estimate2}
\|\de_x b\|_{H^{1+\nu+r}} \lesssim \tau^{-3/4 \theta}  \mathcal{E}_0^{(1-\theta)}\mathcal{M}_r^{\theta}(t).
\end{align}

Plugging all the computations above in the first component of (i), 

\begin{align*}
\int_0^t (t-\tau)^{-1/4} & \|\de_y (-\Delta)^{-1/2} \Omega \cdot \de_x b\|_{W^{1+\nu+r, 1}} \, d\tau  \lesssim \int_0^t \|\de_y (-\Delta)^{-1/2}\Omega \|_{ H^{1+\nu+r}} \|\de_x b\|_{L^2}\, d\tau\\
& \quad + \int_0^t  (t-\tau)^{-1/4} \|\de_y (-\Delta)^{-1/2}\Omega \|_{L^2} \|\de_x b\|_{ H^{1+\nu+r}} \, d\tau\\
& \lesssim  \int_0^t  (t-\tau)^{-1/4} \|\de_y (-\Delta)^{-1/2}\Omega \|_{ H^{s_0}}^\theta  \|\de_y (-\Delta)^{-1/2}\Omega \|_{ H^{s_1}}^{1-\theta} \|\de_x b\|_{L^2}\, d\tau\\
& \quad + \int_0^t  (t-\tau)^{-1/4} \|\de_y (-\Delta)^{-1/2}\Omega \|_{L^2} \|\de_x b\|_{H^{s_0}}^\theta \|\de_x b\|_{H^{s_1}}^{(1-\theta)}\, d\tau \\
& \lesssim \mathcal{M}^{1+\theta}_r (t) \, \mathcal{E}_0^{(1-\theta)}
\int_0^t  (t-\tau)^{-1/4} \tau^{-3/4(\theta+1) } \, d\tau \simeq t^{-\phi},
\end{align*}

where, according to Lemma \ref{lem-BHN}, $\phi=\min \left\{\frac 14, \frac{3 \theta}{4}+\frac 3 4,  \frac 1 4+\frac{3 \theta}{4}+\frac 3 4-1 \right\}$. We choose $\theta$ such that

\begin{align*}
\frac 3 4 \theta \ge \frac 1 4, \qquad \text{i.e.} \quad \theta \ge \frac 1 3.
\end{align*}

We apply Lemma \ref{lem-gagliardo} with $\theta=\frac 1 3$ and $s=1+\nu+r$ and $s_0=r$. Thus one has that 
\begin{align}\label{eq:s1}
s_1=\dfrac{3s-(r-1)}{2}=2+\frac 3 2 \nu + r.
\end{align}

Plugging \eqref{eq:estimate1} and \eqref{eq:estimate2} in \eqref{eq:first-line},
\begin{align*}
 \|\de_y (-\Delta)^{-1/2}\Omega \cdot \de_x b\|_{W^{1+\nu+r, 1}} & \lesssim \tau^{-1} \mathcal{E}_0^{\frac 2 3} \mathcal{M}_\sigma^{\frac 4 3}(t)
=  \tau^{-1} \mathcal{E}_0^{\frac 2 3} \mathcal{M}_r^{\frac 4 3}(t).
\end{align*}

Appealing to Lemma \ref{lem-BHN},
\begin{align*}
\int_0^t (t-\tau)^{-1/4} \|\de_y (-\Delta)^{-1/2} \Omega \cdot \de_x b\|_{ W^{1+\nu+r, 1}} \, d\tau & \lesssim t^{-\frac 14} \mathcal{E}_0^{\frac 2 3}\mathcal{M}_r^{\frac 4 3}(t).
\end{align*}

Similarly, we handle the second term of the first component of (i) as follows
\begin{align*}
\|\de_x (-\Delta)^{-1/2} \Omega \cdot \de_y b\|_{ W^{1+\nu+r, 1}} & \lesssim  \|\de_x (-\Delta)^{-1/2} \Omega\|_{ H^{1+\nu+r}} \|\de_y b\|_{L^2} + \|\de_x (-\Delta)^{-1/2} \Omega\|_{L^2} \|\de_y b\|_{ H^{1+\nu+r}}\\
& \lesssim  \|\de_x \Omega\|_{ H^{\nu+r}} \|\de_y b\|_{L^2} + \|\de_x (-\Delta)^{-1/2} \Omega\|_{L^2} \|\de_y b\|_{ H^{1+\nu+r}}\\
& \lesssim  \|\de_x \Omega\|_{ H^{r-1}}^{\theta}  \|\Omega\|_{ H^{{s}_1}}^{(1-\theta)} \|\de_y b\|_{L^2} + \|\de_x (-\Delta)^{-1/2}\Omega\|_{L^2}\|\de_y b\|_{ H^{r-1}}^{\tilde \theta} \|\de_y b\|_{ H^{\tilde{s}_1}}^{(1-\tilde \theta)}\\
&\lesssim \mathcal{E}_0^{(1-\theta)} \tau^{-\left(\frac 5 4 \theta + \frac 1 4\right) } \mathcal{M}_r^{1+\theta}(\tau)+  \mathcal{E}_0^{(1-\tilde \theta)} \tau^{-\left(\frac 5 4  + \frac 1 4 \tilde \theta \right) } \mathcal{M}_r^{1+\tilde \theta}(\tau).
\end{align*}

Applying the same procedure, we minimize among the $\theta$'s satisfying
$$\frac 1 4 + \frac 5 4 \theta + \frac 1 4 - 1 \ge \frac 1 4, \quad \theta \ge \frac 3 5.$$
We choose now $\theta = \frac 3 5$, so that ${s}_1=\frac 3 2 + \frac 5 2 \nu + r,$ while it is enough to set $\tilde \theta=\nu$, so that $\tilde{s}_1=\frac{1+2\nu}{1-\nu} + r <  1+ \nu_0 + r$ thanks to inequality \eqref{ineq:nu}. The second component of (i) gives
\begin{align*}
\int_0^t (t-\tau)^{-1/4} \|\de_x (-\Delta)^{-1/2} \Omega \cdot \de_y b\|_{H^r} \, d\tau & \lesssim \mathcal{E}_0^{\frac 2 5}  \mathcal{M}_r^{\frac 8 5}(t) \int_0^t (t-\tau)^{-1/4} \tau^{-1} \, d\tau\\
&   \lesssim t^{-\frac 1 4} \mathcal{E}_0^{\frac 2 5}\mathcal{M}_r^{\frac 8 5 }(t).
\end{align*}

The estimates of (ii)-(iv) are similar, we only sketch how to handle the first component of (iii).
\begin{align*}
& \int_0^t (t-\tau)^{-3/4} \|[(-\Delta)^{-1/2},(-\Delta)^{-1/2}  \de_y \Omega] (-\Delta)^{1/2}\de_x\Omega\|_{W^{1+\nu+r,1}} \\
& \quad = \int_0^t (t-\tau)^{-3/4} \left\|(-\Delta)^{-1/2}\left[(-\Delta)^{-1/2}  \de_y \Omega \cdot (-\Delta)^{1/2}\de_x\Omega \right]\right\|_{W^{1+\nu+r,1}} \quad (a)\\
& \qquad +  \int_0^t (t-\tau)^{-3/4} \| (-\Delta)^{1/2} \de_y \Omega \cdot \de_x\Omega \|_{W^{1+\nu+r,1}} \quad (b).
\end{align*}

The term (b) can be treated exactly as done before. We deal with (a), by applying Lemma \ref{lem-bilinear-estimate} stated in the Appendix.
\begin{align*}
& \int_0^t (t-\tau)^{-3/4} \left\|(-\Delta)^{-1/2}\left[(-\Delta)^{-1/2}  \de_y \Omega \cdot (-\Delta)^{1/2}\de_x\Omega \right]\right\|_{W^{1+\nu+r,1}}\\
& \quad = \int_0^t (t-\tau)^{-3/4} \left\|(-\Delta)^{-1/2}  \de_y \Omega \cdot (-\Delta)^{1/2}\de_x\Omega \right\|_{W^{\nu+r,1}}\\
& \quad \lesssim  \int_0^t (t-\tau)^{-3/4} \|(-\Delta)^{-1/2}  \de_y \Omega\|_{L^2} \|(-\Delta)^{1/2}\de_x\Omega\|_{H^{\nu+r}}\\
& \qquad +  \int_0^t (t-\tau)^{-3/4}  \|(-\Delta)^{-1/2}  \de_y \Omega\|_{H^{\nu+r}} \|(-\Delta)^{1/2}\de_x\Omega\|_{L^2}\\
& \quad \lesssim  \int_0^t (t-\tau)^{-3/4} \|(-\Delta)^{-1/2}  \de_y \Omega\|_{L^2} \|\de_x\Omega\|_{H^{1+\nu+r}}\\
& \qquad  +  \int_0^t (t-\tau)^{-3/4}  \| \de_y \Omega\|_{H^{-1+\nu+r}} \|(-\Delta)^{1/2}\de_x\Omega\|_{L^2}\\
& \quad \lesssim  \int_0^t (t-\tau)^{-3/4} \|(-\Delta)^{-1/2}  \de_y \Omega\|_{L^2} \|\de_x\Omega\|^\theta_{H^{r-1}} \|\de_x\Omega\|^{(1-\theta)}_{H^{s_1}}\\
& \qquad  + \int_0^t (t-\tau)^{-3/4}  \| \de_y \Omega\|^\theta_{H^{r-1}}  \| \de_y \Omega\|^{(1-\theta)}_{H^{s_1}} \|(-\Delta)^{1/2}\de_x\Omega\|_{L^2}\\
& \quad \lesssim t^{-\frac 3 4} \mathcal{E}_0^{\frac 2 3} \mathcal{M}_r^{\frac 4 3}(t).
\end{align*}

In the end one has

\begin{align*}
t^{\frac 1 4} \|b(t)\|_{ H^r} + t^{\frac 3 4 } \|\Omega(t)\|_{ H^r}  \lesssim  \mathcal{E}_0 + \mathcal{E}_0^{\frac 2 3}\mathcal{M}_r^{\frac 4 3 }(t)+\mathcal{E}_0^{\frac 2 5}\mathcal{M}_r^{\frac 8 5 }(t).
\end{align*}

The next step is to consider
\begin{align*}
&\begin{pmatrix}
\|D_\x^{r-1}  \de_x b(t)\|_{L^2}\\
\|D_\x^{r-1} \de_x \Omega(t)\|_{L^2}
\end{pmatrix} \lesssim \begin{pmatrix}
\min\{1, t^{-3/4}\} \|b_0\|_{W^{1+\nu+r,1}}+\min\{1, t^{-5/4}\} \|\Omega_0\|_{W^{1+\nu+r,1}}\\
\min\{1, t^{-5/4}\} \|b_0\|_{W^{1+\nu+r,1}}+\min\{1, t^{-7/4}\} \|\Omega_0\|_{W^{1+\nu+r,1}}
\end{pmatrix} \\
&\qquad +\int_0^t \begin{pmatrix}
(t-\tau)^{-3/4} \left[\| (-\Delta)^{-1/2} \de_y\Omega \cdot \de_x b\|_{W^{1+\nu+r,1}}+\| (-\Delta)^{-1/2} \de_x\Omega \cdot \de_y b\|_{W^{1+\nu+r,1}}\right]\\
(t-\tau)^{-5/4} \left[\| (-\Delta)^{-1/2} \de_y\Omega \cdot \de_x b\|_{W^{1+\nu+r,1}}+\|(-\Delta)^{-1/2} \de_x \Omega \cdot \de_y b\|_{W^{1+\nu+r,1}}\right]\\
\end{pmatrix} \quad (i)\\
&\qquad +\int_0^t \begin{pmatrix}
(t-\tau)^{-5/4} \left[\| (-\Delta)^{-1/2} \de_x\Omega \cdot \de_y \Omega\|_{W^{1+\nu+r,1}} +\|(-\Delta)^{-1/2} \de_y \Omega \cdot \de_x \Omega\|_{W^{1+\nu+r,1}}\right] \\
(t-\tau)^{-7/4} \left[\|(-\Delta)^{-1/2} \de_x \Omega \cdot \de_y \Omega\|_{W^{1+\nu+r,1}} + \| (-\Delta)^{-1/2} \de_y\Omega \cdot \de_x \Omega\|_{W^{1+\nu+r,1}}\right]
\end{pmatrix} \quad (ii)\\
&\qquad + \int_0^t \begin{pmatrix}
(t-\tau)^{-5/4} \|[(-\Delta)^{-1/2},  (-\Delta)^{-1/2} \de_y\Omega] (-\Delta)^{1/2}\de_x\Omega\|_{W^{1+\nu+r,1}} \\
(t-\tau)^{-7/4} \|[(-\Delta)^{-1/2},  (-\Delta)^{-1/2} \de_y\Omega] (-\Delta)^{1/2}\de_x\Omega\|_{W^{1+\nu+r,1}} \\
\end{pmatrix}\quad (iii)\\
&\qquad + \int_0^t \begin{pmatrix}
(t-\tau)^{-5/4} \|[(-\Delta)^{-1/2}, (-\Delta)^{-1/2}  \de_x\Omega] (-\Delta)^{1/2}\de_y\Omega\|_{W^{1+\nu+r,1}} \\
(t-\tau)^{-7/4} \|[(-\Delta)^{-1/2},  (-\Delta)^{-1/2} \de_x\Omega] (-\Delta)^{1/2}\de_y\Omega\|_{W^{1+\nu+r,1}}
\end{pmatrix}\quad (iv).
\end{align*}

Applying the same reasoning as before, 
\begin{align*}
t^{\frac 3 4} \|\de_x b(t)\|_{ H^{r-1}} + t^{\frac 5 4} \|\Omega(t)\|_{ H^{r-1}}  \lesssim  \mathcal{E}_0 + \mathcal{E}_0^{\frac 2 3}\mathcal{M}_r^{\frac 4 3}(t)+\mathcal{E}_0^{\frac 2 5}\mathcal{M}_r^{\frac 8 5}(t)
\end{align*}

In the previous estimate we used again Lemma \ref{lem-gagliardo} with $s=1+\nu+r$, $s_0=r-1$ and $\theta=\frac 13$, so that $s_1=2+\frac 32 \nu + r$ with $r \ge 0$. 

Summing up all the above together, one has that
\begin{align*}
\mathcal{M}_r(t)  \lesssim  \mathcal{E}_0 + \mathcal{E}_0^{\frac 2 3}\mathcal{M}_r^{\frac 4 3}(t)
+\mathcal{E}_0^{\frac 2 5}\mathcal{M}_r^{\frac 8 5}(t),
\end{align*}

which concludes the proof.
\end{proof}


\begin{remark}[On the regularity assumptions of Theorem \ref{thm:decay-nonlinear}]
In the statement of Theorem \ref{thm:decay-nonlinear}, the initial data $(\Omega_0, b_0) \in W^{1+\nu_0+s, 1}(\R^2) \cap \dot H^{-1}(\R^2)$. Notice that, appealing to Lemma \ref{lem-initial-data}, we see that $(\Omega_0, b_0) \in W^{1+\nu_0+s, 1}(\R^2) \cap \dot H^{-1}(\R^2) \subset \dot H^{s+\tilde \nu} (\R^2) \cap \dot H^{-1}(\R^2) \subset H^{s+\tilde \nu} (\R^2) \cap \dot H^{-1}(\R^2)$ for any choice of $\tilde \nu < \nu_0$. This is the reason why, thanks to \eqref{ineq:nu}, we lose the decay informations on 2 derivatives instead of for instance $2+\frac 3 2 \nu$ as formula \eqref{eq:s1} would suggest. More precisely, in Theorem \ref{thm:decay-nonlinear} we assume that $(b_0, \Omega_0) \in W^{1+\nu_0 + s, 1} \cap \dot H^{-1}$, which implies that the unique solution $(b(t), \Omega(t)) \in H^{s+\tilde \nu} \cap \dot H^{-1}$ for any $0 < \tilde \nu < \nu_0$. Now formula \eqref{eq:s1} implies that we need to control our solution $(b(t), \Omega(t))$ in $H^{r+2+\frac 32 \nu}$. By means of \eqref{ineq:nu}, we see that $ H^{r+2+ \nu_0}  \subset H^{r+2+\frac 3 8 \nu_0} \subset H^{r+2+\frac 32 \nu}$, which is still within the maximal regularity of our solution $(b(t), \Omega(t))$.
\end{remark}
\section{Acknowledgement}
This work has been partially supported by the GNAMPA projects 2019-2020 of INdAM.

\newpage
\appendix
\section{}
The Interpolation Lemma is stated below.
\begin{lemma}[\cite{Chemin2011}, Proposition 1.52]\label{lem-gagliardo}
Let $s_0 \le s \le s_1$. Then
\begin{align*}
\|g\|_{ H^s} \lesssim \|g\|_{ H^{s_0}}^\theta \|g\|_{ H^{s_1}}^{1-\theta} \quad \text{with} \quad   s=\theta s_0+(1-\theta)s_1.
\end{align*}
\end{lemma}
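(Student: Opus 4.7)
The plan is to prove this interpolation inequality as an immediate consequence of Plancherel's identity and Hölder's inequality applied pointwise on the Fourier side. Starting from
\[
\|g\|_{H^s}^2 = \int_{\R^2} (1+|\xi|^2)^s \, |\widehat g(\xi)|^2 \, d\xi,
\]
the relation $s = \theta s_0 + (1-\theta)s_1$ with $\theta \in [0,1]$ (uniquely determined by $s_0 \le s \le s_1$ via $\theta = (s_1-s)/(s_1-s_0)$ when $s_0 < s_1$, the case $s_0 = s_1$ being trivial) lets me factor the weight as $(1+|\xi|^2)^s = (1+|\xi|^2)^{\theta s_0}(1+|\xi|^2)^{(1-\theta)s_1}$ and correspondingly split $|\widehat g|^2 = (|\widehat g|^2)^\theta (|\widehat g|^2)^{1-\theta}$. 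The integrand then factors exactly as
\[
\bigl[(1+|\xi|^2)^{s_0}|\widehat g(\xi)|^2\bigr]^\theta \bigl[(1+|\xi|^2)^{s_1}|\widehat g(\xi)|^2\bigr]^{1-\theta}.
\]

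Next, I would apply Hölder's inequality with conjugate exponents $p = 1/\theta$ and $q = 1/(1-\theta)$, both legitimate thanks to $\theta \in [0,1]$. This gives directly
\[
\|g\|_{H^s}^2 \le \left(\int_{\R^2} (1+|\xi|^2)^{s_0}|\widehat g|^2 \, d\xi \right)^\theta \left(\int_{\R^2} (1+|\xi|^2)^{s_1}|\widehat g|^2 \, d\xi\right)^{1-\theta} = \|g\|_{H^{s_0}}^{2\theta}\|g\|_{H^{s_1}}^{2(1-\theta)},
\]
and taking square roots concludes the proof, in fact with implicit constant equal to $1$.

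There is no real obstacle here: the argument is a textbook application of Hölder in Fourier variables, with the boundary cases $\theta \in \{0,1\}$ being trivial and the interior case handled uniformly by the same estimate. The identical argument, replacing the Japanese bracket weight $(1+|\xi|^2)^s$ by the homogeneous weight $|\xi|^{2s}$, yields the analogous inequality in the homogeneous spaces $\dot H^s$, which is what is truly needed in certain places of the proof of Theorem \ref{thm:decay-nonlinear} where the solution is only controlled in $\dot H^{-1}$ at low frequencies.
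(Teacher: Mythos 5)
Your proof is correct and is exactly the standard argument: the paper itself gives no proof of this lemma, only the citation to \cite{Chemin2011}, where Proposition 1.52 is established by the same pointwise factorization of the weight and H\"older's inequality on the Fourier side. Nothing further is needed; your remark that the identical argument yields the homogeneous $\dot H^s$ version (with constant $1$) is also accurate.
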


We also employed the following bilinear estimate.

\begin{lemma}[\cite{Wan2019}, Lemma 2.1]\label{lem-bilinear-estimate}
Let $s>0$. Then, for $\frac 1 r = \frac 1 p + \frac 1 q$,
\begin{align*}
\|D_\x^s (fg)\|_{L^r(\R^2)} \lesssim \|f\|_{L^p(\R^2)} \|D_\x^s g\|_{L^q(\R^2)} + \|D_\x^s f\|_{L^p(\R^2)} \|g\|_{L^q(\R^2)}.
\end{align*}
\end{lemma}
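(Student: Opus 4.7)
The plan is to use Littlewood-Paley theory combined with Bony's paraproduct decomposition. Let $\{\Delta_j\}_{j\in\mathbb{Z}}$ be the standard dyadic projectors with $\widehat{\Delta_j u}$ supported in the annulus $\{|\xi|\sim 2^j\}$, and let $S_j = \sum_{k\le j-1}\Delta_k$ be the associated low-frequency cut-offs. I would decompose the product as
\[
fg = T_f g + T_g f + R(f,g),
\]
where $T_f g = \sum_j S_{j-1} f \cdot \Delta_j g$, $T_g f = \sum_j S_{j-1} g \cdot \Delta_j f$, and $R(f,g)=\sum_{|j-k|\le 1} \Delta_j f \cdot \Delta_k g$, and control each of the three pieces in $L^r(\R^2)$ separately after applying $D_\x^s$.

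For the paraproduct $T_f g$, each summand $S_{j-1} f \cdot \Delta_j g$ is Fourier-localized on an annulus of size comparable to $2^j$, so that $D_\x^s$ acts on it essentially as multiplication by $2^{js}$ (this can be made rigorous by inserting a fattened projector $\tilde\Delta_j$ and using the fact that the corresponding Fourier multiplier is uniformly bounded on $L^r$). Hölder's inequality with $\tfrac{1}{r}=\tfrac{1}{p}+\tfrac{1}{q}$, combined with the uniform $L^p$-boundedness of $S_{j-1}$, then yields
\[
\|D_\x^s(S_{j-1} f \cdot \Delta_j g)\|_{L^r} \lesssim 2^{js}\|f\|_{L^p}\|\Delta_j g\|_{L^q}.
\]
The frequency supports of the summands lie in essentially disjoint annuli, so that the vector-valued Littlewood-Paley square function theorem (valid for $1<r<\infty$) permits me to sum in $j$ and recover $\|f\|_{L^p}\|D_\x^s g\|_{L^q}$. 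The paraproduct $T_g f$ is treated symmetrically and contributes the second term on the right-hand side.

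The main obstacle is the high-high remainder $R(f,g)$: each factor $\Delta_j f \cdot \Delta_k g$ with $|j-k|\le 1$ has Fourier support in a \emph{ball} of radius $\sim 2^j$ rather than in an annulus, so the argument above does not apply as the output cannot be directly resummed by a square function and $D_\x^s$ cannot be trivially pulled off as $2^{js}$. To handle this I would insert a Littlewood-Paley projector $\tilde\Delta_\ell$ on the output: the support constraint forces $\ell \le j+C$, and Bernstein's inequality provides the factor $2^{\ell s}$. The hypothesis $s>0$ is used precisely here, to perform the geometric summation $\sum_{\ell \le j+C} 2^{\ell s} \lesssim 2^{js}$. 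After that, Cauchy-Schwarz in $j$ together with the square function characterization of $L^r$ distributes the $s$-derivatives onto the higher-frequency factor and yields a bound of the form $\|D_\x^s f\|_{L^p}\|g\|_{L^q} + \|f\|_{L^p}\|D_\x^s g\|_{L^q}$.

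Summing the three contributions gives the desired fractional Leibniz estimate. The Hölder relation $\tfrac{1}{r}=\tfrac{1}{p}+\tfrac{1}{q}$ is used pointwise throughout; the condition $1 < p, q, r < \infty$ enters implicitly through the boundedness of the Littlewood-Paley square function (endpoint cases would require Hardy-space substitutes, as in the original Kenig-Ponce-Vega argument, but this paper invokes the lemma only in the regime where the square function applies).
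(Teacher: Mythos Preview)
The paper does not prove this lemma at all: it is stated in the Appendix and attributed to \cite{Wan2019}, Lemma~2.1, with no argument given. It is the classical fractional Leibniz (Kato--Ponce) inequality, and the authors simply quote it as a known tool.

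Your proposal is the standard route to this inequality (Coifman--Meyer/Kenig--Ponce--Vega via Bony's paraproduct decomposition) and is essentially correct. One small sharpening: in the paraproduct piece $T_f g$, invoking only the ``uniform $L^p$-boundedness of $S_{j-1}$'' is not quite enough to pass from the individual estimates to the sum; what one actually uses is the pointwise bound $|S_{j-1}f|\lesssim Mf$ by the Hardy--Littlewood maximal function, so that $Mf$ can be pulled out of the square function before applying H\"older and the Littlewood--Paley characterization of $\|D_\x^s g\|_{L^q}$. This is also where the restriction $1<p<\infty$ enters. With that adjustment your sketch goes through, and your treatment of the remainder (using $s>0$ for the geometric sum over output frequencies) is exactly the standard one.
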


Finally, we state a technical result.
\begin{lemma}[\cite{BHN2007}, Lemma 5.2]\label{lem-BHN}
For any $0 \le \gamma, \kappa < 1$ and $t \ge 2$, let $\phi:=\min\{ \gamma, \kappa, \gamma+\kappa-1\}$. Then
\begin{align*}
\int_0^t \min\{1, (t-\tau)^{-\gamma}\} \min\{1, \tau^{-\kappa}\} \, d\tau=t^{-\phi}.
\end{align*}
\end{lemma}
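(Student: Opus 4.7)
The key observation is that for $t \ge 2$ the integration domain $[0,t]$ splits naturally into three pieces on which the two $\min$ factors simplify differently. Specifically, I would write $[0,t] = [0,1] \cup [1, t-1] \cup [t-1, t]$: on $[0,1]$ we have $\tau \le 1$ so $\min\{1,\tau^{-\kappa}\}=1$, while $t-\tau \ge 1$ so $\min\{1,(t-\tau)^{-\gamma}\}=(t-\tau)^{-\gamma}$; on $[t-1,t]$ the roles are reversed; on the middle piece both factors become genuine power functions. The plan is to bound each of the three contributions separately and then compare with $t^{-\phi}$.

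On $[0,1]$, since $t-\tau \in [t-1,t]$ and $t - 1 \ge t/2$ for $t \ge 2$, the factor $(t-\tau)^{-\gamma}$ is bounded by $\lesssim t^{-\gamma}$, so this piece contributes $\lesssim t^{-\gamma}$. By symmetry, $[t-1, t]$ contributes $\lesssim t^{-\kappa}$. For the middle piece $[1,t-1]$, I would split further at $\tau = t/2$: on $[1, t/2]$ we have $t-\tau \ge t/2$, so $(t-\tau)^{-\gamma} \lesssim t^{-\gamma}$ pulls out of the integral and one is left with $\int_1^{t/2} \tau^{-\kappa}\, d\tau \lesssim t^{1-\kappa}$ (here the hypothesis $\kappa<1$ is essential to avoid a logarithm), yielding a bound $\lesssim t^{1-\gamma-\kappa}$; the piece $[t/2, t-1]$ is handled symmetrically with the same bound.

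Summing the three contributions shows that the integral is controlled by $t^{-\gamma} + t^{-\kappa} + t^{1-\gamma-\kappa}$, which for $t \ge 2$ is comparable to $t^{\max\{-\gamma,\,-\kappa,\,1-\gamma-\kappa\}} = t^{-\phi}$ with $\phi=\min\{\gamma,\kappa,\gamma+\kappa-1\}$. The matching lower bound (the statement is written with ``$=$'', which I interpret as $\simeq$ up to constants, consistently with its use in the main estimates) is obtained by retaining only the sub-region where whichever of the three exponents is minimal dominates, and noting that there the integrand is pointwise bounded below by a constant multiple of the relevant power of $t$ on a set of comparable measure; for instance, if $\gamma+\kappa-1$ is the minimum, one uses the region $[t/4, 3t/4]$ where both power factors are of their expected size. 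The only real obstacle is bookkeeping: the estimate has three competing regimes, and one must track which dominates in each range of parameters to identify the correct lower bound. Crucially, the strict inequalities $\gamma,\kappa<1$ keep every sub-integral in pure power form, so the argument remains elementary and does not require any logarithmic corrections.
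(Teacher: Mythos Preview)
The paper does not supply its own proof of this lemma: it is merely quoted in the Appendix as \cite{BHN2007}, Lemma 5.2, without argument. There is therefore nothing to compare against directly, but your proof is the standard one and is correct. The three-region split $[0,1]\cup[1,t-1]\cup[t-1,t]$, the further halving of the middle piece at $t/2$, and the use of $\gamma,\kappa<1$ to avoid logarithms are exactly how this convolution-in-time estimate is customarily handled (and is essentially what appears in \cite{BHN2007}).

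One small simplification you may want to note: under the stated hypotheses $0\le\gamma,\kappa<1$ one always has $\gamma+\kappa-1<\gamma$ and $\gamma+\kappa-1<\kappa$, so in fact $\phi=\gamma+\kappa-1$ identically. Hence the ``three competing regimes'' in your lower-bound discussion collapse to one: the middle region $[t/4,3t/4]$ always furnishes the matching lower bound $\gtrsim t^{1-\gamma-\kappa}$, and no case analysis over which exponent is minimal is needed.
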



\bibliographystyle{abbrv}
\bibliography{biblio}

\end{document}